\newtheorem{theorem}{Theorem}[section]
\newtheorem{corollary}{Corollary}[theorem]
\newtheorem{lemma}[theorem]{Lemma}
\title{Geometric properties of integrable Kepler and Hooke billiards with conic section boundaries}
\author[1]{Daniel Jaud}
\affil[1]{\footnotesize Gymnasium Holzkirchen, Germany, Daniel.Jaud.PhD@gmail.com}
\author[2]{Lei Zhao}
\affil[2]{Institute of Mathematics, University of Augsburg, Lei.Zhao@math.uni-augsburg.de}
\date{}
\begin{document}
\maketitle
\flushbottom

%\hrulefill\\ %Zeile mit Linie füllen

\begin{abstract}
\noindent
We study the geometry of reflection of a massive point-like particle at conic section boundaries. Thereby the particle is subjected to a central force associated with either a Kepler or Hooke potential. The conic section is assumed to have a focus at the Kepler center, or have its center at the Hookian center respectively. When the particle hits the boundary it is ideally reflected according to the law of reflection. These systems are known to be integrable.

We describe the consecutive billiard orbits in terms of their foci. We show that the second foci of these orbits always lie on a circle in the Kepler case.  In the Hooke case, we show that the foci of the orbits lie on a Cassini oval. {For both systems we analyze the envelope of the directrices of the orbits} as well.

\end{abstract}

\vspace*{0.4cm}
{\textbf{Keywords:} billiards,  central potential, duality, foci curves, Cassini oval, Kepler problem, Hooke problem}

\vspace*{0.4cm}
{\textbf{MSC-Classification:} 14H70, 37C79, 37J99, 37N05}

%\vspace*{0.7cm}
%\hrulefill\\ %Zeile mit Linie füllen
%\tableofcontents
%\hrulefill\\
%\newpage
%\hrulefill\\

\section{Introduction}

In 1868, Ludwig Boltzmann published an article \cite{Boltzmann1868} in which he thought established the ergodicity for a class of simple mechanical systems. The system he considered in particular contains the case of a point-like massive particle under the influence of a Keplerian $1/r-$potential and an ideally straight reflection wall. The orbit of the particle are arcs of Keplerian orbits.
When hitting the reflection wall, it gets elastically reflected. As we see this defines a Kepler billiard system.
%Since this system can be seen as a gravitational interaction and a billiard type behaviour at the point of reflection along the boundary one also can call this a gravitational billiard system. Recently the described system has attracted new interests. 
In contrast to what Boltzmann asserted, it was recently shown in \cite{gallavotti2020theorem} that this system inhabits an additional integral of motion, is therefore integrable and non-ergodic. We next state this result:
%The corresponding theorem is stated next.

\begin{theorem}[Gallavotti-Jauslin]
\label{thm:Galavotti}
Let $F$ and $F_i$ be the foci of a Kepler ellipse $\mathcal{K}_i$, where the center of force lies at $F$. Further consider a straight line $l$ which the orbit of the Kepler ellipse $\mathcal{K}_i$ intersects at the point of reflection $P$. Then the focus $F_{i+1}$ of the new reflected ellipse $\mathcal{K}_{i+1}$ lies on a common circle around the mirror image $F_{mr}$ of $F$ with respect to the line $l$ with radius $|F_i F_{mr}|$. 
\end{theorem}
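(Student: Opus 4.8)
The plan is to reduce the statement to the single equality $|F_{i+1}F_{mr}| = |F_i F_{mr}|$ and to prove it by combining conservation of energy with the optical (focal) reflection property of the ellipse. First I would exploit that the billiard bounce at $P$ is elastic: the wall $l$ moves neither the collision point $P$ nor the center of force $F$, and it preserves the particle's speed. Hence the Kepler energy $E = \tfrac12 v^2 - \mu/r$ with $r=|PF|$ is unchanged across the reflection, and the vis-viva relation $v^2 = \mu\bigl(2/r - 1/a\bigr)$ forces the two semi-major axes to agree, $a_{i+1}=a_i=:a$. Since $\mathcal{K}_i$ and $\mathcal{K}_{i+1}$ both carry the focus $F$, both pass through $P$, and both have semi-major axis $a$, the string property $|PF| + |PF_i| = 2a = |PF| + |PF_{i+1}|$ gives at once $|PF_i| = |PF_{i+1}| = 2a - r$. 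Thus $F_i$ and $F_{i+1}$ already lie on a common circle centered at $P$, and it remains to control their angular positions.

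Next I would pin down those angular positions as seen from $P$. By the optical property of the ellipse, the tangent line at $P$ makes equal angles with the two focal radii, so the line $PF_i$ is the mirror image of the line $PF$ across the incoming tangent $t_i$ (the incoming velocity direction), and likewise $PF_{i+1}$ is the mirror image of $PF$ across the outgoing tangent $t_{i+1}$. The billiard law supplies the last ingredient: specular reflection means $t_{i+1}$ is the mirror image of $t_i$ across $l$, while by definition $F_{mr}$ is the mirror image of $F$ across $l$, so the line $PF_{mr}$ is the mirror image of $PF$ across $l$. Writing these directions as angles $\phi,\tau_i,\lambda$ (for $PF$, $t_i$, $l$) and using that reflection across an axis at angle $\theta$ sends $\alpha \mapsto 2\theta-\alpha$, a short directed-angle computation gives $\angle F_i P F_{mr} = \pi + 2(\lambda-\tau_i)$ and $\angle F_{i+1} P F_{mr} = \pi - 2(\lambda-\tau_i)$, so the rays $PF_i$ and $PF_{i+1}$ are symmetric about the line $PF_{mr}$ and the two included angles have equal cosine.

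Finally, applying the law of cosines in the triangles $P F_i F_{mr}$ and $P F_{i+1} F_{mr}$, which share the side $PF_{mr}$, have equal sides $|PF_i|=|PF_{i+1}|$, and equal-cosine included angles, yields $|F_i F_{mr}| = |F_{i+1} F_{mr}|$. Hence $F_{i+1}$ lies on the circle about $F_{mr}$ of radius $|F_i F_{mr}|$, exactly as asserted. I expect the only delicate point to be this angle-chasing step: one must orient the focal and tangent directions consistently and invoke the reflection property with the correct sign, so that the two mirror operations (across $t_i,t_{i+1}$ on one side, and across $l$ on the other) compose to make the \emph{cosines} of the two included angles equal rather than merely making the unoriented lines bisect (which would leave a spurious supplementary-angle ambiguity). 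Everything else follows directly from energy conservation and the defining string property of the ellipse.
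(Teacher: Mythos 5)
Your proof is correct --- but note that the paper contains no proof of this statement to compare against: Theorem~\ref{thm:Galavotti} is imported from \cite{gallavotti2020theorem} and is then used as an ingredient (applied to the tangent line of $\mathcal{B}_K$ at the reflection point) in the geometric proof of Theorem~\ref{thm:Kepler_first_integral}. The closest thing in the paper to an independent derivation is the second, algebraic proof of Theorem~\ref{thm:Kepler_first_integral}, via conservation of $D=L^2-2\mathbf{c}_K A_1$ and the Laplace--Runge--Lenz vector, from which the straight-line statement is recovered in the degenerate limit $\mathbf{a}_K\to 0$. Your argument therefore supplies exactly the elementary, self-contained proof that the paper outsources, and it is sound: elastic reflection preserves speed and position, so vis-viva gives $a_{i+1}=a_i$, hence $|PF_i|=|PF_{i+1}|$ by the string property (this is the paper's Lemma~\ref{lemma:Circle_P}); the ray-level optical property (ray $PF_i$ at angle $2\tau_i-\phi+\pi$, the $+\pi$ being precisely the ray-versus-line ambiguity you flag), the billiard law $\tau_{i+1}=2\lambda-\tau_i$, and the fact that ray $PF_{mr}$ sits at angle $2\lambda-\phi$ do yield your directed angles $\pi\pm 2(\lambda-\tau_i)$, whose cosines agree, so the law of cosines closes the argument. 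One simplification worth making: your two angle identities say precisely that reflection in the line $PF_{mr}$ carries the ray $PF_i$ to the ray $PF_{i+1}$; combined with $|PF_i|=|PF_{i+1}|$, this reflection maps $F_i$ to $F_{i+1}$, and since $P$ and $F_{mr}$ lie on the reflection axis, $|F_iF_{mr}|=|F_{i+1}F_{mr}|$ follows with no trigonometry at all. This mirror-point formulation is also what the paper actually uses downstream (``$F_2$ is the mirror point of $F_1$ with respect to the line $\overline{F'P}$'' in the proof of Theorem~\ref{thm:Kepler_first_integral}). What the paper's algebraic route buys instead is uniform coverage of hyperbolic and repulsive arcs and of conic rather than straight mirrors; your synthetic proof, as written for ellipses, would need the analogous string and optical properties of the hyperbola to reach that generality.
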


This additional first integral allowed G. Felder \cite{Felder_2021} to identify an associated elliptic curve for the compactification of the joint level set, an integration of the problem with a short computation on $3$-periodic orbits. The computation on n-periodic orbits of the system has been continued by S. Gasiorek and M. Radnovi\'{c} \cite{gasiorek2023periodic}.
%With the help of this elliptic curve they were able to calculate conditions for $n-$periodic orbits of this system. In addition the managed to prove the corresponding Poncelet property in this gravitational setup.
%S. Gasiorek and M. Radnovi\'{c} \cite{gasiorek2023periodic}
In \cite{Zhao2022}, this additional first integral is linked to the energy of a Kepler problem on the sphere.

In a related situation, it has been shown \cite{jaud2022gravitational} that the description of consecutive orbits in terms of the foci of a particle parabolic trajectory under the influence of a constant, uniform gravitational field, yields a fruitful analysis on the geometric properties of the system. 

%can easily be studied in the Hooke system and then be carried over to the Kepler description. It turns out that due the linearity of the force calculations in the Hooke system can often be carried out more effectively. See e.g. \cite{Fedorov2001}, where ellipsoidal billiard with a quadratic potential is analysed or \cite{Barrera2023}, where the motion inside an elliptic boundary is discussed. 
In dimension 2, the Kepler problem is dual to the Hooke problem via the complex square mapping \cite{MacLaurin,Goursat}. Panov \cite{Panov} observed that this extends to Kepler and Hooke billiards. An alternative way to establish the first integral of Gallavotti-Jauslin is presented in \cite{takeuchi2021conformal}. The Hooke billiards with a centered conic section domain is integrable, which follows from \cite{Jacobi1866}. Their dynamics has been studied in e.g. \cite{Fedorov2001} or 

\noindent
\cite{Barrera2023}.
%in two dimensions 
%has a dual description in indentterms of an associated system with Hooke potential 

%Via an appropriate conformal map properties in the one system can be studied and afterwards being mapped to the other system to obtain equivalent insights. 
%In particular the question of integrability 
%can easily be studied in the Hooke system and then be carried over to the Kepler description. It turns out that due the linearity of the force calculations in the Hooke system can often be carried out more effectively. See e.g. \cite{Fedorov2001}, where ellipsoidal billiard with a quadratic potential is analysed or \cite{Barrera2023}, where the motion inside an elliptic boundary is discussed. 

Apart from the two distinct systems it also appears in celestial mechanics \cite{Barutello_2023} that combinations of the Hooke and Kepler system give rise to a reasonable approach to describe the dynamics of a massive object within and outside a region of a given mass distribution, in terms of refraction billiard systems they introduce. Their method covers the case of a Kepler billiard in an elliptic domain, and they showed that in the case the Kepler center is put at the center of the elliptic domain, then the Kepler billiard system is chaotic. In \cite{deblasi2023}, Barutello and de Blasi explained that putting a Kepler center in a generic position inside an ellipse results always in a chaotic billiard system.

%In this work we will extend the system considered in %\cite{Boltzmann1868,Felder_2021,gasiorek2023periodic} to the case, where the boundary is given by a conic section in two dimensions.

The main purpose of this work is to consider some geometric properties of the second foci in other integrable Hooke and Kepler billiards reflected at a conic section in the plane.

The structure of this paper is as follows: In Section \ref{sec:Setup} we introduce necessary concepts and general settings that will be used throughout this paper. 
%In particular we will define the corresponding Hooke and Kepler systems as well as the considered boundaries.
In Section \ref{sec:Kepler}, we discuss the geometric interpretation of the additional first integral of the Kepler billiards from \cite{takeuchi2021conformal}, and describe the construction procedure of consecutive Kepler orbits in terms of consecutive (second) foci, which in general can be applied to Kepler billiards with other reflection walls. 
%For the conic section boundaries we will see that all consecutive foci lie on a common circle similar to Thm. \ref{thm:Galavotti}.
In addition we will derive some geometric properties of the system such as the focal reflection property as well as the envelope curves of the directrices of consecutive Kepler orbits. In Section \ref{sec:Hooke} we apply the conformal duality between the Hooke and Kepler system to carry the discussions to the Hooke billiards.
 
In particular, we show that in the corresponding Hooke billiards, consecutive foci of the Hooke orbits lie on certain Cassini ovals. We have thus identified a simple geometric property for which the Kepler side is simpler than the Hooke counterpart.

%As in the Kepler system we will review the corresponding geometric properties.

\section{Setup}\label{sec:Setup}

%The main focus of this work is to

We consider the motion in a plane of a massive, point-like particle under the influence of either a Kepler or Hooke potential from the origin of the coordinate system. The Hamiltonians of the two systems are given respectively by
$$H_K=\frac{p_x^2+p_y^2}{2m}+\frac{\alpha}{\sqrt{x^2+y^2}},$$
and
$$H_H=\frac{p_x^2+p_y^2}{2m}+\frac{k}{2}\cdot \left(x^2+y^2\right).$$
%for the Hooke system. 
Depending on the sign of the coupling constants $\alpha$ and $k$ and the energy, the orbits of the particle are either ellipses or hyperbolic arcs, or parabolas in the case of zero-energy in the Kepler case. We shall mainly focus on elliptic orbits in this study. The extension to other type of orbits are straightforward. For the elliptical orbits we will also use the term flight ellipse to indicate the ellipse containing the orbit arc 
outside the boundary. We will describe the corresponding flight ellipses in terms of their foci 
and semi-major axis $a$.
The particle is further assumed to reflect ideally according to the classical law of reflection along a conic section boundary $\mathcal{B}$, also called mirror, and otherwise propagates under the influence of the potential. We consider them to be oriented clock-wise if they are not degenerated to a line segment to have a well-defined billiard dynamics. We will refer to these dynamical systems as Kepler billiards and Hooke billiards respectively. 

We consider the Kepler billiard with the mirror boundary given by the \emph{focused} conic section

$${\mathcal{B}}_K:\frac{(x-\bf{c}_K)^2}{\bf{a}_K^2}+\frac{y^2}{\bf{a}_K^2 -\bf{c}_K^2}= 1.$$
In the non-degenerate cases the foci of such a conic lie at $F:=(0,0)$ and $F':=(2\bf{c}_K,0)$. The semimajor axis of $\mathcal{B}_K$ is $\bf{a}_K$. The linear eccentricity $\bf{c}_K$, which is the product of $\bf{a}_K$ with the eccentricity $e_K$, equals to the half distance between the two foci: $2 \bf{c}_K=|FF'|$. 

In order to distinguish the boundary parameters from the dynamical flight orbits we use bold letters for the boundaries parameters.
The label $\bf{K}$ corresponds to the system with the Kepler potential whereas the label $\bf{H}$ associated with the Hooke potential will be used later on.

For $\bf{c}_K<a_K$ we obtain ellipses as a boundary, for $\bf{c}_K>a_K$ hyperbolic arcs. Our main focus will lie on those two boundary configurations. On the other hand, many of our analysis extends to circular or parabolic boundaries by taking necessary limits.

%Note {\bf{c}_K}=f_0/2 !!!

For the Hooke system, we consider the boundary given by a \emph{centered} conic section
$${\mathcal{B}}_H:\frac{x^2}{{\bf{a}_H}^2}+\frac{y^2}{{\bf{a}_H}^2-{\bf{c}_H}^2}= 1,$$
where again depending on the value of the semi-major axis $\bf{a}_H$ with respect to the linear eccentricity $\bf{c}_H$ we obtain an elliptic or hyperbolic boundary.

In contrast to the Kepler system the foci ${F_H:=(-{\bf{c}_H},0)}$ and ${F_H':=({\bf{c}_H},0)}$ of the Hooke boundary $\mathcal{B}_H$ lie symmetrically around the origin, at which the center of the force locates. 

The motion of the associated ellipses for the Kepler or Hooke system can in principle lie on either side of the corresponding boundaries, with closely-related dynamics. We mainly focus on the region not containing the origin. Some representative cases are displayed in Fig. \ref{fig:Setup}. 

For the cases where the orbits lie in the domain containing the origin, the Hooke center is regular but the Kepler center is singular. Nevertheless, it is classically known that collisions with the Kepler center can be regularized by exactly the Hooke-Kepler correspondence which we shall use. With the regularization, the orbits continue backward after bounced back at the Kepler center. This consideration allows us to establish most of of our analysis in these cases as well.

\begin{figure}[htb]
    \centering
    \includegraphics[scale=0.8]{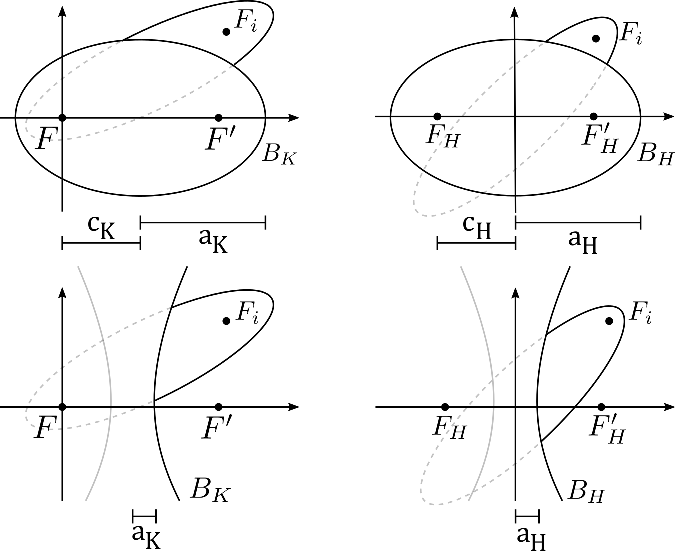}
    \caption{\textit{Left:} Representations of the considered cases for the Kepler system. \textit{Right:} Representation of the considered cases for the Hooke system.}
    \label{fig:Setup}
\end{figure}

In all systems, the consecutive \emph{flight ellipses} are characterized in terms of the energy and their consecutive foci. In the Kepler case, with the origin as a focus, the flight ellipses are completely determined by the positions of the \emph{second foci} $F_i$ and the conserved energy $E$. In the Hooke system the pair of foci $\{F_i, -F_i\}$ as well as the energy $E$ determine the form of the flight ellipse. We denote the consecutive orbits in the Kepler case by $\mathcal{K}_i$ and for the Hooke system by $\mathcal{H}_i$.

These mechanical billiard systems are known to be integrable in the sense that there exists an additional first integral independent of the energy \cite{Jacobi1866, Fedorov2001, Panov,takeuchi2021conformal, takeuchi2022projective}. In the next section we discuss this additional first integral and its geometrical implications in the Kepler case.

\section{Kepler billiards}\label{sec:Kepler}
\subsection{First integrals and envelopes}

In this section we derive the first integral in addition to the total energy for the Kepler billiard we set up.
We first present a  geometrical proof. Then we show that this agrees with the geometrical interpretation of the first integral established in \cite{takeuchi2021conformal}.

As already mentioned in Section \ref{sec:Setup} the associated consecutive Kepler flight ellipses, denoted by $\mathcal{K}_i$ with $i\in \mathbb{N}$, can be describe by the position of the second focus $F_i$ and the total energy $E<0$ of the particle. Since we have $E \sim 1/a$ from the study of Kepler problem, geometrically, the conservation of the total energy easily translates into the conservation of the semi-major axis $a$ for all $ \mathcal{K}_i$.
We will denote the second focus of $\mathcal{K}_i$ by $F_i=(F_{x,i},F_{y,i})$.

\begin{lemma}
\label{lemma:Circle_P}
Let $P$ be the point of reflection along the boundary $\mathcal{B}_K$ of a Kepler ellipse $\mathcal{K}_i$ with foci $F$ and $F_i$,
%where the fixed focus $F$ is associated with the center of force. 
Then the 
%new 
second focus $F_{i+1}$ of the reflected ellipse $\mathcal{K}_{i+1}$ satisfies
%lies on a common 
%the circle around $P$ with constant radius 
%$\mathcal{K}_i$
$|PF_i|=|PF_{i+1}|$. 
\end{lemma}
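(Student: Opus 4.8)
The plan is to exploit the defining focal property of an ellipse together with the two conserved quantities of the billiard map: the location of the Kepler center and the total energy. The crucial observation is that $P$ is a point common to both consecutive flight ellipses, so that applying the ellipse property at $P$ on each of $\mathcal{K}_i$ and $\mathcal{K}_{i+1}$ yields the claim after a single cancellation. Notably, I do not expect to need the law of reflection at all for this particular statement; it only becomes relevant when one wishes to locate the \emph{direction} in which $F_{i+1}$ sits, as in Theorem \ref{thm:Galavotti}.

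First I would note that, as already recorded in the excerpt, conservation of the total energy $E<0$ is equivalent to constancy of the semi-major axis, so that $\mathcal{K}_i$ and $\mathcal{K}_{i+1}$ share a common value $a$. Next I would observe that both ellipses have the Kepler center $F=(0,0)$ as a focus, and that both pass through the reflection point $P$: the particle occupies $P$ at the instant of reflection, hence $P$ lies on the incoming arc of $\mathcal{K}_i$ and on the outgoing arc of $\mathcal{K}_{i+1}$, and thus on both flight ellipses. Applying the focal sum property of an ellipse at $P$ then gives
\[
|PF| + |PF_i| = 2a = |PF| + |PF_{i+1}|,
\]
and cancelling the common term $|PF|$ yields $|PF_i| = |PF_{i+1}|$, as claimed.

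The only point deserving care --- and what I would regard as the (mild) main obstacle --- is justifying that the two flight ellipses genuinely share the same semi-major axis. This rests on the reflection being elastic: at $P$ the speed, and hence the kinetic energy, is unchanged, while the position, and therefore the Kepler potential, is unchanged as well, so the total energy coincides before and after the bounce. Given $E \sim -1/a$, this forces the common value of $a$. Once this is in place the argument is essentially immediate; in particular the detailed geometry of the boundary $\mathcal{B}_K$ and the precise law of reflection enter only afterwards, when one upgrades this distance equality to the full circle statement of Gallavotti--Jauslin type.
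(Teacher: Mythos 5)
Your proof is correct and follows exactly the argument the paper intends: the paper justifies this lemma in one line as ``a direct consequence of the definition for an ellipse and the preservation of the semi-major axis,'' which is precisely your focal-sum cancellation $|PF|+|PF_i| = 2a = |PF|+|PF_{i+1}|$ backed by energy conservation. You have simply spelled out the details (including why elastic reflection preserves $a$) that the paper leaves implicit.
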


This is a direct consequence of the definition for an ellipse and the preservation of the semi-major axis $a$ of the flight ellipses.

Having introduced all relevant definitions for the Kepler billiard we now state the first theorem describing a common curve on which the consecutive foci $F_i$ lie on. In \cite{jaud2022gravitational} these curves were coined \textit{foci curves}. Here we will adopt this notion.

\begin{theorem}[The foci circle in the Kepler case]\label{thm:Kepler_first_integral}
Consider the Kepler billiard with Kepler center $F$ with orbits outside a conic section boundary $\mathcal{B}_K$ with foci $F$ and $F'$. 
%and the center of force being identical with $F$. 
Let $\mathcal{K}_1$ be a Kepler ellipse (hyperbola) with foci $F$ and $F_1$. Then all the second foci $F_i$ of consecutive flight Kepler ellipses (hyperbolas) $\mathcal{K}_i$ lie on a common circle around the second focus $F'$ of $\mathcal{B}_K$ with radius $R=|F'F_i|=const.$. In particular, $R$ is a first integral in addition to the energy of this system.
\end{theorem}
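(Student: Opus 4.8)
My plan is to prove the statement by induction on the reflection index, reducing everything to a single bounce: it suffices to show that at each reflection point $P\in\mathcal{B}_K$ one has $|F'F_{i+1}|=|F'F_i|$, from which the common circle around $F'$ and the constancy of $R$ follow at once, and the independence from the energy is automatic because the value of the semi-major axis $a$ never enters beyond the equal-length relation already recorded in Lemma \ref{lemma:Circle_P}. The geometric heart of the claim is the stronger assertion that $F_{i+1}$ is \emph{exactly} the mirror image of $F_i$ in the line $PF'$. Granting this, the reflection in $PF'$ is an isometry fixing $F'$, so $|F'F_{i+1}|=|F'F_i|$ is immediate, and Lemma \ref{lemma:Circle_P} reappears as the compatible statement $|PF_{i+1}|=|PF_i|$, since a reflection preserves distances to points of its axis. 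Thus the whole theorem is reduced to identifying the bisector of the angle $\angle F_iPF_{i+1}$ with the focal line $PF'$ of the boundary.

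To establish that $PF'$ bisects $\angle F_iPF_{i+1}$ I would run an angle chase at $P$ using three reflection properties. First, the optical property of the boundary conic: its normal at $P$ bisects $\angle FPF'$, equivalently the boundary tangent $t$ makes equal angles with the focal radii $PF$ and $PF'$. Second, the optical property of the flight conics: the tangent to $\mathcal{K}_i$ at $P$ makes equal angles with $PF$ and $PF_i$ (its normal bisects $\angle FPF_i$), and likewise for $\mathcal{K}_{i+1}$. Third, the billiard law itself: the velocity reflects across $t$, so, since the incoming velocity is tangent to $\mathcal{K}_i$ and the outgoing velocity is tangent to $\mathcal{K}_{i+1}$, the tangent lines of $\mathcal{K}_i$ and $\mathcal{K}_{i+1}$ at $P$ are mirror images of one another in $t$. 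Measuring all directions from $P$ by their angle to $t$ and writing the directions of $PF,PF',PF_i,PF_{i+1}$ as $\gamma,\gamma',\beta_i,\beta_{i+1}$, the two flight-conic properties express $\beta_i,\beta_{i+1}$ through the tangent directions of $\mathcal{K}_i,\mathcal{K}_{i+1}$ together with $\gamma$, the billiard law relates those two tangent directions through $t$, and the boundary property relates $\gamma,\gamma'$ through $t$. Eliminating the tangent directions and $t$ should collapse the entire system to the single relation $\beta_i+\beta_{i+1}=2\gamma'$, which is precisely the statement that $PF'$ bisects $\angle F_iPF_{i+1}$.

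Combined with the equal-length relation of Lemma \ref{lemma:Circle_P}, this exhibits $F_{i+1}$ as the reflection of $F_i$ across $PF'$ and closes the argument. I expect the main obstacle to be bookkeeping rather than anything conceptual: the optical properties distinguish internal from external angle bisectors, and these roles interchange between the elliptic and hyperbolic regimes, both for the boundary $\mathcal{B}_K$ and for the flight conics, so the clean identity $\beta_i+\beta_{i+1}=2\gamma'$ must be checked to survive, up to the unavoidable ambiguity modulo $\pi$ of undirected lines, in each case and under one consistent orientation convention for the billiard map. A secondary point to verify is that the branch on which $F_i$ lies in the hyperbolic case, and the regularized bounce at the Kepler center, remain compatible with the same reflection-in-$PF'$ description, so that a single circle around $F'$ indeed carries all the consecutive second foci.
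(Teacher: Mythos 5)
Your proposal is correct, and it reaches the theorem by a genuinely different route than the paper, even though both arguments pivot on the identical geometric fact: that $F_{i+1}$ is the mirror image of $F_i$ in the line $PF'$, after which $|F'F_{i+1}|=|F'F_i|$ is immediate because that reflection fixes $F'$. The paper establishes the mirror property by invoking the Gallavotti--Jauslin theorem (Thm.~\ref{thm:Galavotti}) as a black box, applied to the tangent line of $\mathcal{B}_K$ at $P$: writing $F_P$ for the mirror image of $F$ in that tangent, Thm.~\ref{thm:Galavotti} gives $|F_PF_i|=|F_PF_{i+1}|$, the optical property of the boundary places $F_P$ on the line $PF'$, and together with Lemma~\ref{lemma:Circle_P} one has two points of the line $PF'$ (namely $P$ and $F_P$) equidistant from $F_i$ and $F_{i+1}$, so $PF'$ is the perpendicular bisector of $F_iF_{i+1}$. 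You instead prove the bisector property directly by an angle chase combining the optics of the boundary, the optics of the two flight conics, and the elastic reflection law; your asserted collapse does work: with the boundary tangent at direction $0$ one gets $\gamma'\equiv\pi-\gamma$, $\beta_i\equiv 2\psi_i-\gamma+\pi$, $\beta_{i+1}\equiv 2\psi_{i+1}-\gamma+\pi$, and $2\psi_{i+1}\equiv-2\psi_i \pmod{2\pi}$ from the billiard law, whence $\beta_i+\beta_{i+1}\equiv 2\gamma' \pmod{2\pi}$, so the reflection in $PF'$ carries the ray $PF_i$ exactly onto the ray $PF_{i+1}$ in the elliptic case. What each approach buys: the paper's proof is shorter because it outsources the straight-line case to the cited theorem; yours is self-contained and elementary, needing only classical conic optics, and in fact it re-proves the content of Thm.~\ref{thm:Galavotti} (which is recovered in the limit $\mathbf{a}_K\to 0$) rather than assuming it. Your flagged bookkeeping concerns (internal versus external bisectors in the hyperbolic regime, the mod-$\pi$ ambiguity) are the right ones to check, but note the paper faces the same issue --- its geometric proof also treats only the elliptic boundary and defers the rest to ``the same arguments'' and to its second, entirely algebraic proof via the Laplace--Runge--Lenz vector, which handles hyperbolic and repulsive arcs uniformly and which is independent of both geometric arguments.
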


The theorem gives a geometric interpretation of the first integral derived in \cite{takeuchi2021conformal}.
Note that in the case of a circular boundary the two foci $F$ and $F'$ coincide. Also the parabolic boundary may be viewed as the limiting case of a hyperbolic boundary where the distance between the foci $F$ and $F'$ is very large. In this case one effectively can produce a particle bouncing inside a parabolic mirror with constant gravitational force. For this limiting case the theorem reproduces the same result obtained in \cite{jaud2022gravitational}. Alternatively the limit ${\bf{a}_K}\rightarrow 0$ reproduces the theorem of Gallavotti-Jauslin \ref{thm:Galavotti} for the reflection along a straight line.
For the proof of this Theorem \ref{thm:Kepler_first_integral} we only consider the elliptic boundary case. The result for the hyperbolic boundary then follows by the same arguments. The equivalent results for the circular and parabolic boundary follow by the aforementioned limits.

\begin{proof}
Consider Fig. \ref{fig:Ellipse_Foci} \textit{Bottom}. Without loss of generality let the initial flight ellipse with second $F_1$ intersect $\mathcal{B}_K$ at the point of reflection $P\in \mathcal{B}_K$. 

We apply Thm. \ref{thm:Galavotti} with respect to  the tangent line of $\mathcal{B}_K$ at $P$. 
%we need the mirroring point $F_P$ of $F$ with respect to the tangent line at $P$. 
Since $|FP|=|F_PP|$ and, due to the optic property of the ellipse,
we see that $F_P$ lies on the straight line trough $P$ and $F'$. Applying Lemma \ref{lemma:Circle_P} and Thm. \ref{thm:Galavotti} we obtain $F_2$ as the intersection point of the circle centered at $F_P$ with radius $|F_PF_1|$
and the other circle centered at $P$ with radius $|PF_1|$.

Thus $F_2$ is the mirror point of $F_1$ with respect to the line $\overline{F'P}$, and thus $|F_1F'|=|F_2F'|$. Since the same construction holds true for all consecutive flight ellipses, it follows that $|F'F_i|=|F'F_1|=R=const.~\forall i\in \mathbb{N}$.
\end{proof}

\begin{figure}[!htb]
    \centering
    \includegraphics[scale=0.5]{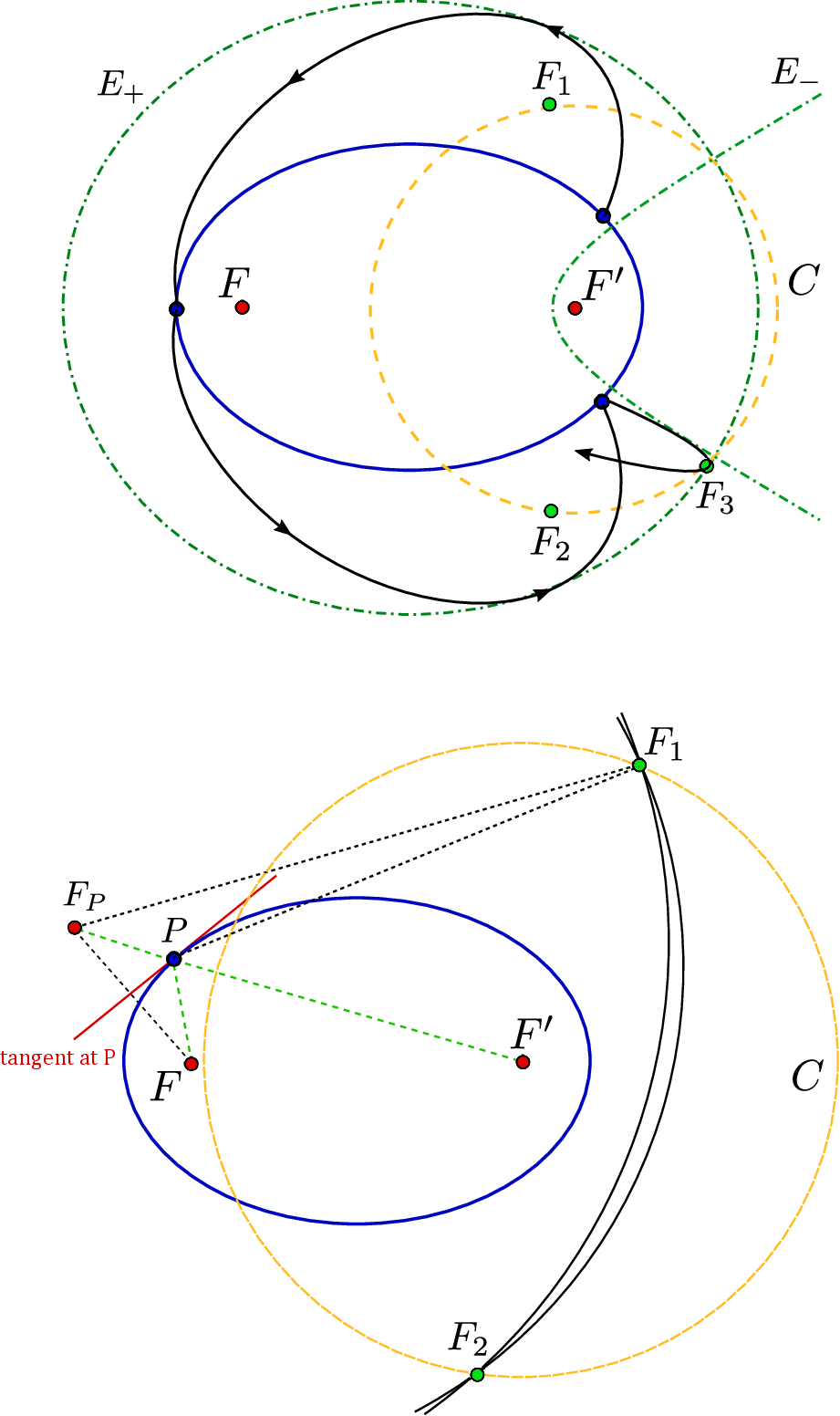}
    \caption{\textit{Top:} Consecutive flight ellipses with reflection along elliptic boundary and envelope curves $E_\pm$ (dash-dotted). \textit{Bottom:} Geometric construction of consecutive foci $F_i$ lying upon the same circle $C$ (dashed) around $F'$.}
    \label{fig:Ellipse_Foci}
\end{figure}

%For the case of the boundary $\mathcal{B}_K$ resembling a hyperbolic arc an equivalent result can be obtained as stated in the following theorem.

%\begin{theorem}[Kepler first integral for hyperbolic boundary]
%    Consider an hyperbolic boundary $\mathcal{B}_K$  and an initial Kepler ellipse $K_1$ with first focus at $F$ and second focus $F_1$. Then all foci $F_i$ of consecutive Kepler flight ellipses $\mathcal{K}_i$ lie on a common circle around the second focus $F'$ of $\mathcal{B}_K$ with radius $R=|F'F_i|=const.$. 
%\end{theorem}

%\begin{proof}
%    The proof follows the same arguments as in the elliptic boundary case, i.e. by making use of Lemma \ref{lemma:Galavotti} and \ref{lemma:Circle_P} in addition with the law of reflection along the hyperbolic boundary shows that $F_{i+1}$ again is the mirror point of $F_i$ with respect to the line $F'P$. Thus again $|F'F_i|=|F'F_1|=R=const. ~\forall i\in \mathbb{N}$. See Fig. \ref{fig:Hyperbola_Foci} \textit{Right} for a graphical illustration.
%\end{proof}

%\begin{figure}[!htb]
%    \centering
%    \includegraphics[scale=0.85]{Hyperbola_Foci.eps}
%    \caption{\textit{Left:} Consecutive flight ellipses with reflection along hyperbolic boundary with envelope curves $E_\pm$. \textit{Right:} Geometric construction of consecutive foci $F_i$ lying upon the same circle $C$ around $F'$ with point of reflection $P$.}
%    \label{fig:Hyperbola_Foci}
%\end{figure}

Note that the construction procedure by intersecting two circles to obtain consecutive foci also works for other boundaries. Thus in general consecutive foci $F_i$ can be obtained numerically via a simple algorithm.

Next we provide another proof of the above theorem which illustrates the geometrical meaning of the Gallavotti-Jauslin first integral $D$ as discussed in \cite{takeuchi2021conformal}. This directly covers the hyberbolic boundary case and the cases of combinations of confocal conic sections as well.

\begin{proof}
Let $L$ be the angular momentum,
$A=(A_1,A_2)$ the Laplace-Runge-Lenz vector, $m$ the mass of the particle, $E$ its energy and $\alpha<0$ the coupling constant of the Kepler potential which we assume to be attractive. It has been shown [Lem 2, \cite{takeuchi2021conformal}] that for the elliptic boundary  the quantity
$$D=L^2-2{\bf{c}_K} A_1$$
is a conserved quantity. As in \cite{Felder_2021}, (at least) in the case of $\alpha<0$, we may use the classical relationship 
$$A_1^2+A_2^2=m^2\alpha^2+2 m E L^2$$
to write $D$ as
$$D=\dfrac{A_1^2+A_2^2}{2 m E}-2{\bf{c}_K} A_1-\dfrac{m\alpha^2}{2E}.$$
The normalized vector $\dfrac{1}{m E}\vec{A}$ is the vector connecting the origin to the second focus of the elliptic orbit, i.e. it is equal
to $\vec{F}_i$.

By completing the square in the above {equation} we have
$${(\dfrac{A_1}{mE}-2{\bf{c}_K})^2}+(-\dfrac{A_2}{m E})^2=4{\bf{c}_K}^2+\dfrac{\alpha^2}{E^2}+\dfrac{2 D}{mE}.$$
This means that the second focus $F_i$ of the reflected elliptic orbits lie on a circle centered at the second focus $(2{\bf{c}_K},0)$ of the elliptic reflection wall, with radius given by 
$$R=\sqrt{4{\bf{c}_K}^2+\dfrac{\alpha^2}{E^2}+\dfrac{2 D}{mE}}.$$

\end{proof}

Changing signs of $\alpha$ and $E$ in the above argument extends the result to hyperbolic attractive and repulsive Kepler arcs as well.

As a last analysis for this section we derive the equations of the envelope curves $E_\pm$ of the flight ellipses. In our setting, these envelope curves are the analogues to the caustics  of standard billiards in the plane.

\begin{corollary}\label{cor:Envelopes_Kepler}
The flight ellipses $\mathcal{K}_i$ are confined by the boundary itself and two envelope curves $E_\pm$ given by confocal conics with the same foci $F$ and $F'$ as the boundary $\mathcal{B}_K$. 
The equations for the envelopes read 
$$E_\pm :\frac{\left(x-\bf{c}_K\right)^2}{\left(a\pm\frac{R}{2}\right)^2}+\frac{y^2}{\left(a\pm \frac{R}{2}\right)^2-\bf{c}_K^2}=1.$$
Here as before $R$ is the radius of the foci curve, $\bf{c}_K$ is the linear eccentricity of the conic section boundary and $a$ is the conserved semi-major axis of the Kepler orbit $\mathcal{K}_i$.
\end{corollary}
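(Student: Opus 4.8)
The plan is to exploit the two conserved quantities already in hand. By conservation of energy every flight ellipse $\mathcal{K}_i$ shares the same semi-major axis $a$, one focus is pinned at the Kepler center $F=(0,0)$, and by Theorem \ref{thm:Kepler_first_integral} its second focus lies on the circle $C$ of radius $R$ centered at $F'=(2\mathbf{c}_K,0)$. I would therefore forget the discrete billiard and regard the flight ellipses as a single one-parameter family, writing the second focus as $F_\theta=F'+R(\cos\theta,\sin\theta)$, and compute the envelope of this family. The member $\mathcal{K}_\theta$ is the locus $\{X:\ |XF|+|XF_\theta|=2a\}$.

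First I would identify the region swept by the family by a purely metric argument. A point $X$ lies on some $\mathcal{K}_\theta$ exactly when $2a-|XF|$ is an attainable value of $|XF_\theta|$; and as $F_\theta$ runs over the whole circle $C$, the distance $|XF_\theta|$ sweeps the interval $[\,|XF'|-R,\ |XF'|+R\,]$ (for $X$ outside $C$). Hence $X$ is covered iff $2a-R\le |XF|+|XF'|\le 2a+R$, which exhibits the swept set as the confocal annular domain bounded by the two ellipses $|XF|+|XF'|=2a\pm R$. These bounding curves are the claimed envelopes $E_\pm$, and their being confocal with $\mathcal{B}_K$ (same foci $F,F'$) is automatic from this description.

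To certify that these boundary curves are genuine envelopes and to extract the stated equations, I would run the classical tangency computation in parallel: setting $G(x,y,\theta)=|XF|+|XF_\theta|-2a$ and solving $G=0$ together with $\partial_\theta G=0$. A short differentiation shows $\partial_\theta G=0$ reduces to $(x-2\mathbf{c}_K)\sin\theta=y\cos\theta$, i.e. the characteristic point is reached exactly when $F_\theta$, $F'$ and $X$ are collinear. On that ray $|XF_\theta|=|XF'|\pm R$, so $|XF|+|XF'|=2a\mp R$, confirming the two confocal ellipses with semi-major axes $a\pm R/2$. Since an ellipse with foci $(0,0)$ and $(2\mathbf{c}_K,0)$ has center $(\mathbf{c}_K,0)$ and linear eccentricity $\mathbf{c}_K$, its Cartesian form is $\frac{(x-\mathbf{c}_K)^2}{(a\pm R/2)^2}+\frac{y^2}{(a\pm R/2)^2-\mathbf{c}_K^2}=1$, which is precisely the asserted formula for $E_\pm$.

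The main obstacle I anticipate is bookkeeping rather than conceptual. One must check that both branches are actually realized: the inner envelope $a-R/2$ requires $R<2a$ and that points of $E_-$ lie outside $C$, so that the relevant endpoint of the distance interval is $|XF'|-R$ and not $R-|XF'|$; and one should confirm tangency (not mere containment) of $E_\pm$ along the family. Finally I would note that the hyperbolic-boundary and degenerate cases follow by the same collinearity condition with the appropriate sign conventions. All of this is routine once the characteristic points are seen to lie on the line $\overline{F'X}$, which is the crux of the argument.
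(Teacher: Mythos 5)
Your proposal is correct, and its core coincides with the paper's own proof: both treat the flight ellipses as a one-parameter family indexed by the angle of the second focus on the foci circle, both solve the classical envelope system (the curve equation together with its $\theta$-derivative), and both identify the characteristic points geometrically as those where $X$, $F'$ and the moving focus are collinear --- this is exactly the paper's ``string construction'' with string lengths $|2a\pm R|$. The differences are worth noting, though. The paper differentiates the squared polynomial form $K(x,y,\alpha_i)=16a^2(x^2+y^2)-(\cdots)^2$ and leaves that computation implicit, appealing to the string picture afterwards; you instead work with the unsquared distance function $G=|XF|+|XF_\theta|-2a$, which makes the reduction of $\partial_\theta G=0$ to the collinearity condition $(x-2\mathbf{c}_K)\sin\theta=y\cos\theta$ completely transparent. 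More substantially, your preliminary sweeping argument --- $X$ lies on some $\mathcal{K}_\theta$ if and only if $2a-R\le |XF|+|XF'|\le 2a+R$ --- does not appear in the paper, and it is what actually establishes the ``confined by'' assertion of the corollary: it shows the union of the flight ellipses is precisely the confocal annular region bounded by $E_\pm$, whereas a bare envelope computation only certifies tangency along two curves. Your listed caveats (one needs $R<2a$ for a nondegenerate inner ellipse, and care with $\bigl|\,|XF'|-R\,\bigr|$ when $X$ lies inside the foci circle) are the right ones and are glossed over in the paper; the only blemish is a harmless labeling swap in your tangency step, where $|XF|+|XF'|=2a\mp R$ should pair with semi-major axis $a\mp R/2$, but since both signs occur the resulting pair of ellipses is the same.
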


\begin{proof}
    The proof follows by a direct computation of the envelope curve. For this each Kepler orbit $\mathcal{K}_i$ can be seen as the set of points $(x,y)\in \mathbb{R}^2$ satisfying
    $$\mathcal{K}_i:K(x,y,\alpha_i)=16a^2(x^2+y^2)-(F_{x,i}^2+F_{y,i}^2-4a^2-2xF_{x,i}-2yF_{y,i})^2=0,$$
    with $F_{x,i}=2{\bf{c}_K}+R\cos(\alpha_i)$ and $F_{y,i}=R\sin(\alpha_i)$, where $\alpha_i$ represents the angle labelling the position of the second focus $F_i$. The corresponding envelope curves $E_\pm$ then follows by solving the two equations
    \begin{align*}
        K(x,y,\alpha_i)&=0,\\
        \partial_{\alpha_i}K(x,y,\alpha_i)&=0.
    \end{align*}For a graphical representation of the confocal envelope curves in the elliptic boundary scenario see e.g. Fig. \ref{fig:Ellipse_Foci} \textit{Top}.
     
      Geometrically, if one considers the explicit form of $E_\pm$, one can interpret these curves with the "string construction" of ellipses/hyperbolas with foci $F=(0,0)$ and $F'=(2{\bf{c_k}},0)$ and string of length $|2a\pm R|$ respectively. Here the set of extremal points $A$ on $\mathcal{K}_i$ forming $E_+$ are those points where the string is aligned with the direction of $\overline{F'F_i}$. Similarly, the set of points $B$ forming $E_-$ are points on $\mathcal{K}_i$ where the string is aligned in the opposite direction of $\overline{F'F_i}$ (compare Fig \ref{fig:String_Construction}). In particular this shows that each Kepler orbit $\mathcal{K}_i$ contains exactly two extremal points $(A,B)$ with respect to the string constructions, with $A\in E_+$ and $B\in E_-$.
\end{proof}

\begin{figure}
    \centering
    \includegraphics[scale=0.8]{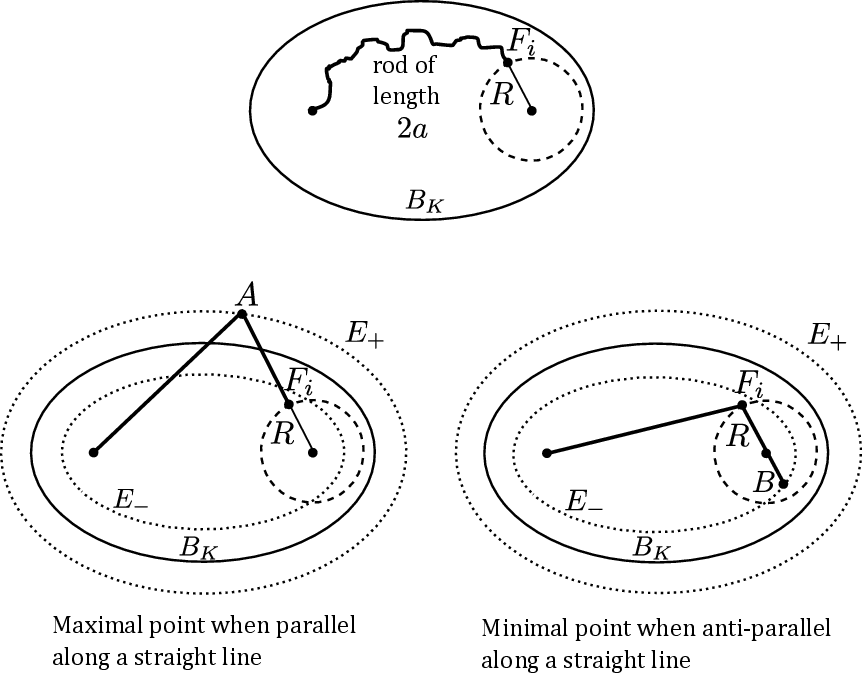}
    \caption{Representative "string construction" for the two envelope curves corresponding to two confocal ellipses.}
    \label{fig:String_Construction}
\end{figure}

Depending on the choice of $a$, $R$ and ${\bf{c}_K}$, the envelope curves correspond to confocal ellipses or hyperbolic arcs. The result is 
similar to the envelope curves obtained for the straight line system as discussed in \cite{gasiorek2023periodic}. Note that for the hyperbolic boundary case in the limit ${\bf{c}_K}\rightarrow \infty$ the system is equivalent to a particle bouncing inside a parabolic mirror with constant gravitational force as discussed e.g. in \cite{jaud2022gravitational}.

\subsection{Focal reflection property}

In this section we consider the billiard dynamics of a Kepler ellipse $\mathcal{K}_i$, whose orbit is passing  through the second focus $F'$ of the boundary $\mathcal{B}_K$. This is the case, when the semi-major axis $a$, the linear eccentricity $\bf{c}_K$ of the boundary and the radius $R$ of the foci circle $C$ satisfy a special condition stated in the Theorem \ref{Thm: Kepler Focal Reflection Property} below.

We first discuss several lemmas.
\begin{lemma}\label{lem:intersection focal ellipses}
Any flight ellipse $\mathcal{K}_i$ intersects $\mathcal{B}_K$ at most at two points.
\end{lemma}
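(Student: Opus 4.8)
The plan is to exploit the fact that both $\mathcal{K}_i$ and the boundary $\mathcal{B}_K$ share the common focus $F=(0,0)$. Generically two conics in the plane meet in four points, so the content of the lemma is that this shared focus halves the count. I would work in polar coordinates $(r,\theta)$ centered at $F$, in which any conic with focus at the origin takes the focal form
$$r=\frac{p}{1+e\cos(\theta-\theta_0)},$$
with $p$ the semi-latus rectum, $e$ the eccentricity and $\theta_0$ the polar angle of the major axis. First I would record both conics in this form, with parameters $(p_1,e_1,\phi_1)$ for $\mathcal{K}_i$ and $(p_2,e_2,\phi_2)$ for $\mathcal{B}_K$.

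The decisive step is to pass to the reciprocal variable $u=1/r$, under which each focal equation becomes affine in $(\cos\theta,\sin\theta)$:
$$u=\frac{1}{p}+\frac{e\cos\theta_0}{p}\cos\theta+\frac{e\sin\theta_0}{p}\sin\theta.$$
At a common point the two conics share the same $(r,\theta)$, hence the same $u$; subtracting the two affine expressions eliminates $u$ and leaves a single scalar equation of the form
$$A\cos\theta+B\sin\theta=C$$
for constants $A,B,C$ assembled from the two parameter triples. Such an equation can be rewritten as $\rho\cos(\theta-\psi)=C$ and therefore admits at most two solutions $\theta\in[0,2\pi)$. Since the focal equation determines $r$ uniquely from $\theta$, each admissible angle gives at most one point on the ellipse, and the bound of two intersection points follows.

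The only subtlety is the degenerate case $A=B=C=0$, in which the equation holds identically and the argument breaks down. Here I would simply observe that $C=0$ forces $p_1=p_2$, while $A=B=0$ then forces $e_1\cos\phi_1=e_2\cos\phi_2$ and $e_1\sin\phi_1=e_2\sin\phi_2$; together these say the two focal equations coincide, so the conics are identical---excluded since $\mathcal{K}_i$ is a genuine flight ellipse distinct from $\mathcal{B}_K$. I expect this non-degeneracy verification to be the only real obstacle, and it is resolved by the elementary remark that two confocal conics with equal semi-latus rectum and equal axis direction must coincide. A final routine check is that the relevant angles yield positive $r$ on both conics, which is automatic at an actual intersection point.
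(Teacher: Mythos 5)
Your proof is correct, but it takes a genuinely different route from the paper's. The paper deliberately defers this lemma to the Hooke section and proves it via the Hooke--Kepler correspondence: under $z\mapsto z^2$ the flight ellipse and the boundary pull back to ellipses centered at the origin; two distinct centered ellipses meet in at most four points, these come in antipodal pairs $\pm z$, and squaring identifies each pair, leaving at most two image points. You instead work directly in the Kepler plane using only the shared focus $F$: in focal polar form the reciprocal radius $u=1/r$ is affine in $(\cos\theta,\sin\theta)$, so subtracting the two focal equations gives $A\cos\theta+B\sin\theta=C$, which has at most two solutions in $[0,2\pi)$ unless it holds identically, and you verify that the identical case forces the two conics to coincide. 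Your version is elementary and self-contained (no duality, no B\'ezout), and it has the merit of making explicit the non-degeneracy hypothesis $\mathcal{K}_i\neq\mathcal{B}_K$, which the paper's proof also needs (distinctness is what justifies the four-point bound for the pulled-back ellipses) but leaves implicit; the paper's version is shorter and serves its expository purpose of showing how a Kepler question trivializes on the Hooke side. One shared caveat: both arguments really treat the case where the relevant piece of $\mathcal{B}_K$ is an ellipse or a single conic arc with focal equation in $r>0$ (your positive-$r$ focal form describes an ellipse or one branch of a hyperbola; the paper's pullback is a centered ellipse). For a full focused hyperbola the bound of two holds per branch but not globally---an ellipse sharing one focus with a hyperbola can meet the union of both branches in four points---so the lemma should be read for the elliptic mirror, or branch-wise in the hyperbolic case, which is consistent with how the paper uses it.
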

We postpone the proof to Section \ref{sec:Hooke}.

\begin{lemma}\label{lem:perp_orbits}
    Let a Kepler flight ellipse $\mathcal{K}_i$ be degenerate to a line segment and contained in the $x$-axis, with the second focus $F_i=(F_{x,i},0)$ and major axis length $2a=|FF_i|=|F_{x,i}|$.  Then either 
    $$F_{x,i}<\bf{c}_K-\bf{a}_K$$
    or $$F_{x,i}>\bf{a}_K+\bf{c}_K.$$
    The consequent motion of the particle is periodic, with reflections either at the pericenter or the apocenter of {the elliptic boundary} $\mathcal{B}_K$.
    %\bl{For the hyperbolic boundary domain containing $F'$ the restriction is 
    %$$F_{x,i}\geq 2{\bf{c}_K}.$$}
\end{lemma}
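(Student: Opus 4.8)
The plan is to extract everything from the elementary geometry of the rectilinear (collision) Kepler orbit and its position relative to the boundary $\mathcal{B}_K$. First I would record the limiting configuration: as the eccentricity of $\mathcal{K}_i$ tends to $1$, its two foci merge with the two endpoints of the degenerate segment. The Kepler center $F=(0,0)$ becomes the pericenter end, where the focal distance vanishes and a (regularizable) collision would occur, while the second focus $F_i=(F_{x,i},0)$ becomes the apocenter end, at the maximal focal distance $2a=|F_{x,i}|$; the apocenter is the turning point of the motion. Thus $\mathcal{K}_i$ is exactly the segment of the $x$-axis joining $F$ to $F_i$, and the remainder of the argument only needs to locate this segment with respect to $\mathcal{B}_K$.

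Next I would impose that a genuine flight arc lie in the billiard region exterior to $\mathcal{B}_K$ (the region not containing the origin, which is our standing assumption). Since $\mathcal{B}_K$ is convex and $F=(0,0)$, being a focus, is interior to it, the segment $[F,F_i]$ leaves the boundary exactly when its far endpoint $F_i$ is exterior. The ellipse $\mathcal{B}_K$ meets the $x$-axis only at its vertices $x=\bf{c}_K-\bf{a}_K$ and $x=\bf{c}_K+\bf{a}_K$, so the exterior part of the axis is $x>\bf{c}_K+\bf{a}_K$ on the positive side and $x<\bf{c}_K-\bf{a}_K$ on the negative side. Demanding that $F_i$ lie in one of these two rays gives precisely the alternatives $F_{x,i}>\bf{a}_K+\bf{c}_K$ and $F_{x,i}<\bf{c}_K-\bf{a}_K$. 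To exclude the intermediate range I would observe that if $F_{x,i}$ fell between the two vertices, then both endpoints $F$ and $F_i$ would be interior, and convexity would force the entire segment inside $\mathcal{B}_K$; the orbit would then never meet the mirror and would fail to be a billiard flight ellipse, a contradiction.

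Finally I would settle the reflection and the periodicity. The segment crosses $\mathcal{B}_K$ at the unique vertex lying between $F$ and $F_i$, namely the far vertex at $x=\bf{c}_K+\bf{a}_K$ in the first case and the near vertex at $x=\bf{c}_K-\bf{a}_K$ in the second. At a vertex the tangent to $\mathcal{B}_K$ is orthogonal to the major axis, so the rectilinear orbit meets the mirror at normal incidence and the law of reflection sends the particle straight back along the $x$-axis. Combined with the fact that the outer endpoint $F_i$ is the Kepler apocenter, hence a turning point, this shows that the particle oscillates indefinitely between the vertex and $F_i$; the motion is therefore periodic, with reflection at the apocenter of $\mathcal{B}_K$ (the far vertex, farther from $F$) in the first case and at its pericenter (the near vertex) in the second.

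The computations are routine, so I expect the only delicate point to be the first one: correctly identifying the rectilinear limit --- that $F$ is the pericenter/collision end and $F_i$ the apocenter --- so that the condition to be imposed is exactly that $F_i$ be exterior, together with the convexity argument that rules out the intermediate range. Once normal incidence at a vertex is in place, the reflection and the periodicity follow at once.
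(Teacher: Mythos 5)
Your proof is correct and takes essentially the same route as the paper's: the two restrictions on $F_{x,i}$ are obtained by demanding that part of the degenerate orbit segment $[F,F_i]$ lie outside the elliptic boundary $\mathcal{B}_K$ along the $x$-axis, which is exactly the paper's (one-sentence) argument. Your write-up is in fact more complete, since you also justify the periodicity claim via normal incidence at the vertices of $\mathcal{B}_K$ and the turning point at the apocenter $F_i$, a point the paper states but leaves implicit.
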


\begin{proof}
    The two restrictions on $F_{x,i}$ follow from the consideration that part of the Kepler orbit lie outside of the elliptic boundary $\mathcal{B}_K$ along the $x-$axis.
\end{proof}

With this Lemma 
%giving rise to special perpendicular orbits 
we can now formulate our main theorem of this section, which states the focal reflection property in our setting.
This property is well-known for standard billiards in the plane. Felder's analysis showed a similar phenomenon exist for the line boundary case. The following theorem, stated for the case of a focused elliptic boundary, provides a generalization. As we will see, only one of the two degenerate orbits exists with the particular parameter setting in the following theorem, which plays the role of the nodal point as in \cite{Felder_2021}.

We call a Kepler flight ellipse \emph{admissible} if $E_+$ lies outside $\mathcal{B}_K$, and if  
$$R>0,~ 2a=R+2\bf{c}_K$$
holds, which implies in particular that $E_-$ is degenerated into the line segment $FF'$ in $\mathcal{B}_K$. Clearly if a Kepler flight ellipse is admissible, then so is any other Kepler flight ellipse on its billiard orbit. All of them intersect $\mathcal{B}_K$ at two points, which permit the definition of a complete forward billiard orbit starting from any of them. The condition $R>0$ is included to permit non-stationary billiard orbits to exist.

\begin{theorem}[Focal Reflection Property]\label{Thm: Kepler Focal Reflection Property}
All admissible flight Kepler ellipses pass through $F'$.
Furthermore, if an admissible flight Kepler ellipse does not degenerate into a line segment lying on the designated $x-$axis through $F$ and $F'$, then along the billiard orbit, the flight Kepler ellipses asymptotically converge to the degenerate line segment between $F$ and the point $P_{max}=(0,R+2{\bf{c}_K})$.

\end{theorem}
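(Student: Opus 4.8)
The plan is to split the statement into two parts: the passage of every admissible orbit through $F'$ (an immediate consequence of the first integral), and the asymptotic statement (which requires analysing the dynamics induced on the foci circle $C$).

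First I would dispatch the claim that every admissible flight ellipse passes through $F'$. By Theorem~\ref{thm:Kepler_first_integral} the second focus $F_i$ of any $\mathcal{K}_i$ lies on the circle $C$ of radius $R$ centred at $F'$, so $|F'F_i| = R$, while $|F'F| = 2\mathbf{c}_K$ since $F=(0,0)$ and $F'=(2\mathbf{c}_K,0)$. Hence
$$|F'F| + |F'F_i| = 2\mathbf{c}_K + R = 2a,$$
where the last equality is exactly the admissibility condition $2a = R + 2\mathbf{c}_K$. By the focal definition of the ellipse $\mathcal{K}_i$ (foci $F$, $F_i$, major axis $2a$) this says precisely that $F' \in \mathcal{K}_i$, and since this holds for every $i$, all admissible flight ellipses pass through $F'$.

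For the asymptotic part I would work with the reflection map that the billiard induces on $C$. Recall from the first proof of Theorem~\ref{thm:Kepler_first_integral} that passing from $\mathcal{K}_i$ to $\mathcal{K}_{i+1}$ reflects the second focus $F_i$ across the line $\overline{F'P}$ joining $F'$ to the reflection point $P\in\mathcal{B}_K$. Writing $F_i = F' + R(\cos\theta_i,\sin\theta_i)$, this reflection across a line through the centre $F'$ of $C$ is the circle map $\theta_{i+1} = 2\psi_i - \theta_i$, where $\psi_i$ is the angular direction of $\overline{F'P}$. I would then isolate the degenerate orbit: by Lemma~\ref{lem:perp_orbits} together with the constraint $F_i\in C$, the only second focus on $C$ with $|FF_i|=2a$ is $F_i=(R+2\mathbf{c}_K,0)$, i.e.\ $\theta=0$. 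This is the unique degenerate (nodal) orbit, the segment from $F$ to $P_{max}$, and it is the unique fixed point of the circle map; admissibility (through $E_+$ lying outside $\mathcal{B}_K$, which forces $R+\mathbf{c}_K>\mathbf{a}_K$) guarantees that this degenerate orbit genuinely reflects at the major-axis vertex, so that $\psi=0$ indeed fixes it.

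Finally I would establish monotone convergence $\theta_i\to 0$. The core is a Lyapunov/monotonicity estimate showing $|\theta_{i+1}|<|\theta_i|$ for every non-nodal admissible orbit — equivalently that the reflection points march monotonically toward the major-axis vertex. Since each orbit arc passes through the boundary focus $F'$ and $\mathcal{B}_K$ is convex, this is the Keplerian analogue of the classical fact that focal trajectories of an elliptic billiard converge to the major axis; I would adapt that argument to the present map by showing that the focal-chord direction obeys $0<\psi_i<\theta_i$ for $\theta_i>0$ (and symmetrically for $\theta_i<0$), which via $\theta_{i+1}=2\psi_i-\theta_i$ forces $|\theta_{i+1}|<|\theta_i|$. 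Convergence $\theta_i\to 0$ then yields $F_i\to(R+2\mathbf{c}_K,0)$ and hence convergence of the flight ellipses to the degenerate segment from $F$ to $P_{max}$. The hard part will be precisely this monotonicity, because the reflection point $P$, and therefore $\psi_i$, is only defined implicitly through the intersection $\mathcal{K}_i\cap\mathcal{B}_K$, with no convenient closed form; Lemma~\ref{lem:intersection focal ellipses} is what makes that intersection point, and hence the map, single-valued and well defined, but extracting the strict inequality requires controlling how the intersection depends on $\theta_i$. Should the direct estimate prove unwieldy, an alternative I would keep in reserve is to transport the problem through the conformal Kepler--Hooke duality of Section~\ref{sec:Hooke} and argue the convergence on the Hooke side.
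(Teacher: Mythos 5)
Your overall route is the same as the paper's: part one (passage through $F'$) is proved exactly as in the paper, and your circle map $\theta_{i+1}=2\psi_i-\theta_i$ is precisely the paper's relation $\alpha_{i+1}=\alpha_i-2(\alpha_i-\lambda_i)$ coming from the mirror construction of $F_{i+1}$. The problem is that the step you yourself flag as ``the hard part'' --- the interlacing inequality $0<\psi_i<\theta_i$ --- is exactly the crux of the paper's proof, and you leave it unproved, offering only a hope of adapting the classical elliptic-billiard argument or of passing to the Hooke side. The paper closes this gap with an idea your proposal is missing: the string construction behind Corollary \ref{cor:Envelopes_Kepler}. Each flight ellipse $\mathcal{K}_i$ touches $E_+$ at the point where the string is aligned with the direction of $\overline{F'F_i}$ and touches $E_-$ where it is anti-aligned (for admissible orbits $E_-$ is the degenerate segment $FF'$); since these two tangency points are the extremal points of $\mathcal{K}_i$ within the confocal family, the two intersection points of $\mathcal{K}_i$ with $\mathcal{B}_K$ necessarily lie on opposite sides of the line $F'F_i$. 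That separation statement is what gives $0<\lambda_i<\alpha_i$ (your $0<\psi_i<\theta_i$) with no implicit-function analysis of the reflection point at all. Note also that your claim that $\theta=0$ is the \emph{unique} fixed point of the circle map rests on the same missing ingredient: Lemma \ref{lem:perp_orbits} only rules out other \emph{degenerate} fixed orbits, while a priori a nondegenerate ellipse could be fixed if its reflection point happened to lie on the line $F'F_i$; it is again the opposite-sides property that excludes this.

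There is a second, purely logical gap at the end: from $|\theta_{i+1}|<|\theta_i|$ for every non-nodal orbit you immediately assert ``Convergence $\theta_i\to 0$''. Strict monotone decrease only gives $|\theta_i|\downarrow L\ge 0$, not $L=0$. The paper supplies the missing limiting argument: by monotonicity the angles converge, the reflection angles $\lambda_i$ are squeezed to the same limit, so the limit flight ellipse meets $\mathcal{B}_K$ in a single point; it is therefore a degenerate segment fixed by the billiard map, hence lies on the major axis, so $\alpha^*\in\{0,\pi\}$; and $\alpha^*=\pi$ is excluded because that segment does not pass through $F'$, whereas passing through $F'$ is a closed condition preserved in the limit. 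Equivalently, a LaSalle-type argument (continuity of the map plus strict decrease away from the unique fixed point, applied to a convergent subsequence) would do, but some such step must be written; as it stands, your argument ends one inference short of the stated conclusion.
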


\begin{proof}
    
A flight Kepler ellipse $\mathcal{K}_i$ is admissible if in particular $2a=R+2\bf{c}_K$ holds, where $R=|F'F_i|$. This condition is equivalent to
$$2a=|FF'|+|F'F_i|.$$
Thus $\mathcal{K}_i$ passes through $F'$.

We now prove the second statement of the theorem.  
We take the $F F'$-axis as the reference axis equal to the $x-$axis. The point $P_{max}$ lies on this axis. We label the position of $F_i$ by the angle $\alpha_i=\angle P_{max} F' F_i$. Similarly we label the reflection point $P_i$ of $\mathcal{K}_i$ (assumed clock-wise oriented when non-degenerate) with $\mathcal{B}_K$ by the angle $\lambda_i=\angle P_{max} F' P_i$ (see Fig. \ref{fig:Focal_Reflection_Property}).
  
We assume that $0<\alpha_i<2 \pi$. We first observe that always $0<\lambda_i<\alpha_i<\lambda_{i+1}$.
This follows from the string construction used in the proof of Corollary \ref{cor:Envelopes_Kepler}. Indeed it follows by this construction that the two intersection points of $\mathcal{K}_i$ with $\mathcal{B}_K$ lies on the different sides of the line $F'F_i$ which realizes the two extremal points as depicted in Fig. \ref{fig:String_Construction}.

Consequently the sequence $\{\alpha_i\}_{i=1,2,3\cdots}$ is strictly decreasing. Moreover, we have the relationship
$$\alpha_{i+1}=\alpha_i-2(\alpha_i-\lambda_i),$$
which follows from the mirror construction of $F_{i+1}$ from $F_i$ as described in Lemma \ref{lemma:Circle_P}.

We now show $\lim_{i \to \infty} \alpha_i=0$. By monotonicity there exists $0\le \alpha^*< 2 \pi$ such that $\lim_{i \to \infty} \alpha_i=\alpha^*$. Consider the limit of the flight ellipses, which by construction has only one intersection with $\mathcal{B}_K$ and is therefore a degenerate Kepler ellipse, or a line segment, which is fixed by the billiard mapping, so it has to lie in the major axis of $\mathcal{B}_K$, thus $\alpha^* \in \{0, \pi\}$. For the case $\alpha^*=\pi$ we see that the degenerate flight ellipse does not passes through $F'$ and cannot be obtained as a limit of a sequence of Keplerian flight ellipses who do so. Thus the only possibility is $\alpha^*=0$ and the limiting Keplerian ellipse is given by the degenerate line segment $F P_{max}$. 

\end{proof}

We remark that similar statement holds for a focused hyperbolic branch boundary not enclosing the Kepler center. Moreover, this focal reflection property holds true for a parabolic mirror since this can be seen as the limiting case of the elliptic/hyperbolic boundary. In the limiting case ${{\bf{c_K}} \to \infty}$, one obtains the motion in a focused parabolic mirror under the influence of a constant gravitational force as studied e.g. in \cite{Masalovich, jaud2022gravitational}. 
In the centered circle boundary case, orbits are straight lines and the system is highly symmetric. The focal reflection property breaks down.

\begin{figure}[!htb]
    \centering
    \includegraphics[scale=0.55]{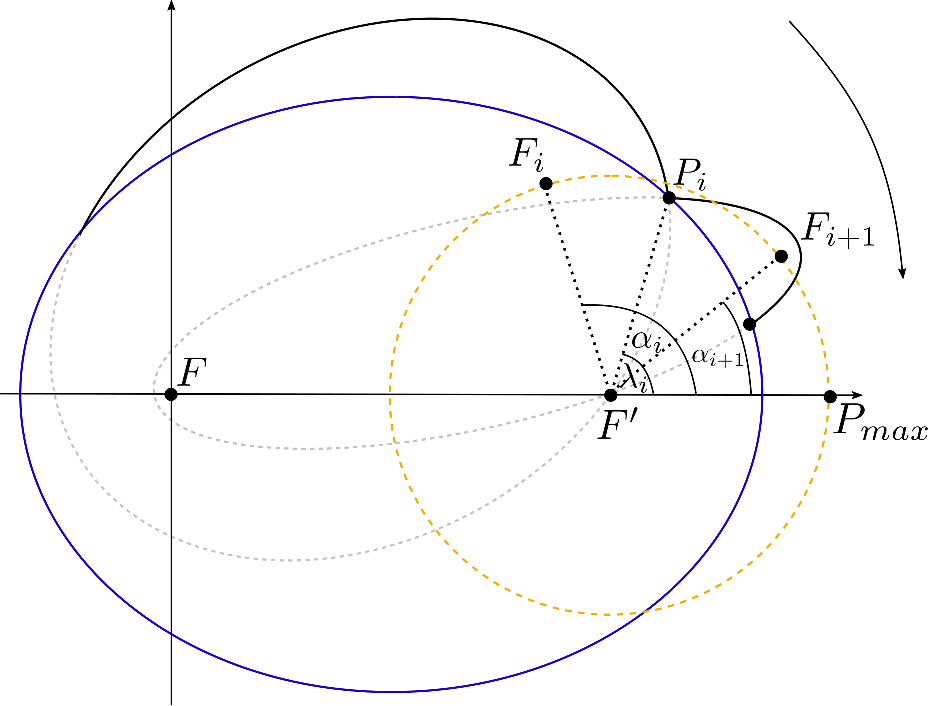}
    \caption{Graphical representation for the asymptotic behaviour of Kepler ellipses "passing" theoretically through $F'$.}
    \label{fig:Focal_Reflection_Property}
\end{figure}

%When we are considering the physical parameter space in the next subsection, we will see that the conditions $2a=R+2\bf{c}_K$ for $\mathcal{K}_i$ passing through $F'$ can always be satisfied by an appropriate choice of the mirror boundary parameters $\bf{c}_K$ and $\bf{a}_K$. 

\subsection{The family of directrices in the Kepler billiard system}

To conic sections with a preferred focus, we can associate a directrix $d$. The family of Kepler ellipses $\mathcal{K}_i$ with the preferred focus at the origin gives rise to a family of directrices $d_K$. 

In this section, we investigate the geometry of the directrices in our integrable Kepler billiards. The result can be extended to various other cases, such as orbits of the attractive/repulsive Kepler problem inside  $\mathcal{B}_K$.

\begin{lemma}\label{lem:directrices}
Let the coordinate of the second Kepler focus be 
$$F_i=(F_x,F_y)={(2{\bf{c}_K}+R\cos(\alpha_i),R\sin(\alpha_i))}.$$
Then the directrix $d_K$ of an Kepler 
elliptic orbit $\mathcal{K}_i$ with the second focus $F_i$ associated to the angle $\alpha_i$ is given by:
\begin{equation}\label{eq:directrices_Kepler}
    d_K:= x\cdot F_x+y\cdot F_y +2a^2-\frac{1}{2}\cdot (F_x^2+F_y^2)=0.
\end{equation}
%\bl{where $\sigma=-1$ for the ellipses and $\sigma=+1$ for the hyperbolic arcs.}
\end{lemma}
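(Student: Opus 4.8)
The plan is to read off the directrix directly from the focus–directrix characterization of the conic, using the bifocal (string) definition of $\mathcal{K}_i$ to do all the algebra in one line. Recall that $\mathcal{K}_i$ has foci $F=(0,0)$ and $F_i=(F_x,F_y)$, hence center $C=\tfrac12(F_x,F_y)$, linear eccentricity $c=\tfrac12\sqrt{F_x^2+F_y^2}$, and numerical eccentricity $e=c/a=\tfrac{1}{2a}\sqrt{F_x^2+F_y^2}$. The directrix associated with the focus $F$ is the line orthogonal to the major axis, on the $F$-side of $C$, characterized by $|XF|=e\cdot\operatorname{dist}(X,d_K)$ for every $X\in\mathcal{K}_i$. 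My goal is to show that the line $\{g=0\}$, with $g(x,y):=xF_x+yF_y+2a^2-\tfrac12(F_x^2+F_y^2)$, is exactly this directrix; substituting $F_x=2\mathbf{c}_K+R\cos\alpha_i$, $F_y=R\sin\alpha_i$ then gives the stated form.

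First I would convert the bifocal definition into a linear relation. Writing $r_1=|XF|=\sqrt{x^2+y^2}$ and $r_2=|XF_i|=\sqrt{(x-F_x)^2+(y-F_y)^2}$, the orbit is $r_1+r_2=2a$. Isolating $r_2=2a-r_1$, squaring, and cancelling $x^2+y^2$ from both sides, the remaining radical disappears and a short rearrangement yields
$$2a\,r_1=xF_x+yF_y+2a^2-\tfrac12(F_x^2+F_y^2)=g(x,y).$$
In particular $g\ge 0$ along $\mathcal{K}_i$, since $g=2a\,r_1$.

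Next I would extract the focus–directrix relation from this identity. The distance from $X$ to the line $\{g=0\}$ is $g(x,y)/\sqrt{F_x^2+F_y^2}$ (the sign being fixed by $g\ge 0$), so the identity above rewrites as
$$r_1=\frac{g}{2a}=\frac{\sqrt{F_x^2+F_y^2}}{2a}\cdot\frac{g}{\sqrt{F_x^2+F_y^2}}=e\cdot\operatorname{dist}\bigl(X,\{g=0\}\bigr),$$
which is precisely the focus–directrix property for the focus $F$. Since a conic with prescribed focus, eccentricity and axis direction has a unique directrix, the line $d_K=\{g=0\}$ must be the directrix of $\mathcal{K}_i$, proving the lemma.

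I do not expect a genuine obstacle: the core step is a single manipulation of the string definition. The only points needing care are verifying that $g\ge 0$ on the orbit, so the absolute value in the distance may be dropped and the directrix is correctly placed on the $F$-side of $C$ (a quick check shows $\{g=0\}$ meets the axis at distance $a/e$ from $C$ on that side), and tracking the normalization $\sqrt{F_x^2+F_y^2}=2c$ when turning the linear form $g$ into an actual distance. The degenerate limits are consistent with the formula and would warrant at most a remark: $e\to1$ as $\mathcal{K}_i$ collapses to the segment $FF_i$, and $e\to0$ for a circular orbit, where the directrix recedes to infinity.
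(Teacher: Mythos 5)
Your proof is correct, but it takes a genuinely different route from the paper. The paper's proof is constructive: it uses the standard facts that the directrix associated with the focus $F$ is perpendicular to the major axis and lies at distance $\tfrac{a}{e}-ae$ from $F$ on the side away from $F_i$, computes the foot point $\vec{P}=-\tfrac{\vec{F}_i}{|\vec{F}_i|^2}\bigl(2a^2-\tfrac12|\vec{F}_i|^2\bigr)$, and then writes the line through $P$ orthogonal to $\vec{F}_i$ in the stated form. You instead start from the bifocal (string) definition $|XF|+|XF_i|=2a$, square once to obtain the linear identity $2a\,|XF|=g(x,y)$ along the orbit, and recognize this as the focus--directrix relation $|XF|=e\cdot\operatorname{dist}(X,\{g=0\})$ with $e=\tfrac{1}{2a}\sqrt{F_x^2+F_y^2}$, concluding by uniqueness of the directrix. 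Your algebra checks out (including the sign claim $g=2a\,|XF|\ge 0$ and the normalization $\sqrt{F_x^2+F_y^2}=2c$), and your route is more self-contained in that it does not presuppose the distance formula $\tfrac{a}{e}-ae$; the price is the appeal to uniqueness of the line satisfying the focus--directrix property, which you assert rather than prove. That step is easily justified --- two distinct lines whose distances to a variable point agree on a set force that set into a union of at most two lines, which cannot contain a nondegenerate ellipse --- so the gap is cosmetic, not substantive. Both arguments tacitly assume $F_i\neq F$ (non-circular orbit), which is implicit in the lemma since otherwise no directrix exists; you flag this limit explicitly, which the paper does not.
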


\begin{proof}
Let $\mathcal{K}_i$ be a Kepler flight ellipse with semi-major axis $a$, with the preferred focus $F$ at the origin and $F_i$ as defined above. The distance of the closest point $P\in d_K$ from the origin is given by $|OP|=\frac{a}{e}-ae,$ where $e$ is the excentricity, related to the foci distance by $ae=\frac{1}{2}| \vec{F}_i|$. %\bl{and $\sigma=\pm 1$ for the hyperbolic arcs and elliptic orbits respectively}. 
Thus the point $P\in d_K$ has the expression
$$\vec{P}=-\frac{\vec{F}_i}{|\vec{F}_i|}\cdot \left(\frac{a}{e}-ae\right)=-\frac{\vec{F}_i}{|\vec{F}_i|^2}\cdot \left(2a^2-\frac{1}{2}|\vec{F}_i|^2\right).$$
The directrix has to run perpendicular to $\vec{F}_i$ through $P$, i.e. it has to take the form
$$d_K:\vec{x}=\begin{pmatrix}
        x\\
        y
\end{pmatrix} =\vec{P}+\lambda \begin{pmatrix}
        -F_y\\
        F_x
\end{pmatrix},~~\lambda \in \mathbb{R},$$
    with the point $P$ on it. Plugging in the expression of $P$ we determine $\lambda$ and consequently the expression for $d_K$.
\end{proof}

Along a billiard orbit, the family of directrices has an envelope. By an \emph{envelope curve} we mean a curve which contains the envelop of the family of directrices along a billiard orbit with the first integrals being fixed. We shall obtain such a curve from the foci curve. Depending on the relation between $2{\bf{c}_K}$ and $R$, this curve is given by different conic sections.

\begin{theorem}
If $2{\bf{c}_K}\neq R$, then the envelop of the family of directrices is contained in either an ellipse or a hyperbola. The equations for these two conic sections are
$$\frac{4(R^2-2{\bf{c}_K})^2\cdot \left(x+{\bf{c}_K}\cdot \frac{4{\bf{c}_K}^2-R^2-4 a^2}{R^2-4{\bf{c}_K}^2}\right)^2}{R^2\cdot (4 a^2+4{\bf{c}_K}^2-R^2)^2}+\frac{4(R^2-4{\bf{c}_K}^2)\cdot y^2}{(4 a^2+4{\bf{c}_K}^2-R^2)^2}=1.$$

For $2{\bf{c}_K}=R$, i.e. when the foci circle $C$ is passing through $F$, the envelope curve for the family of directrices is the parabolic arc with the equation
$$x=\frac{{\bf{c}_K}}{2a^2}\cdot y^2-\frac{a^2}{2{\bf{c}_K}}+2{\bf{c}_K}.$$
For all cases one focus of these conic sections lies at $F'(2{\bf{c}_K},0)$, i.e. the second focus of $\mathcal{B}_K$. The second focus for case $2{\bf{c}_K}\neq R$ lies at $\left(-\frac{8a^2{\bf{c}_K}}{4{\bf{c}_K}^2-R^2},0\right)$.
\end{theorem}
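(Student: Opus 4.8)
The plan is to realise the family of directrices as a one-parameter family of lines $G(x,y,\alpha_i)=0$ and to compute its envelope by the standard elimination $G=\partial_{\alpha_i}G=0$. First I would insert the parametrization $F_x=2{\bf{c}_K}+R\cos(\alpha_i)$, $F_y=R\sin(\alpha_i)$ of Lemma \ref{lem:directrices} into \eqref{eq:directrices_Kepler}. Using $F_x^2+F_y^2=4{\bf{c}_K}^2+4{\bf{c}_K}R\cos(\alpha_i)+R^2$, the directrix equation collects into the form $C_0+A\cos(\alpha_i)+B\sin(\alpha_i)=0$, where $C_0=2{\bf{c}_K}x+2a^2-2{\bf{c}_K}^2-\tfrac{R^2}{2}$, $A=R(x-2{\bf{c}_K})$ and $B=Ry$ are affine functions of $(x,y)$ that no longer depend on $\alpha_i$.

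Next I would impose the two envelope conditions. Differentiating gives $-A\sin(\alpha_i)+B\cos(\alpha_i)=0$, so together with $A\cos(\alpha_i)+B\sin(\alpha_i)=-C_0$ we obtain a linear system for $(\cos(\alpha_i),\sin(\alpha_i))$ with determinant $-(A^2+B^2)$. Solving it by Cramer's rule and substituting into $\cos^2(\alpha_i)+\sin^2(\alpha_i)=1$ eliminates the parameter and yields the compact envelope equation
\[
C_0^2=A^2+B^2,
\]
that is, $\bigl(2{\bf{c}_K}x+2a^2-2{\bf{c}_K}^2-\tfrac{R^2}{2}\bigr)^2=R^2\bigl[(x-2{\bf{c}_K})^2+y^2\bigr]$.

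The key observation, which I would use to read off the foci at once, is that the right-hand side equals $R^2\,|P-F'|^2$ with $P=(x,y)$ and $F'=(2{\bf{c}_K},0)$, while $C_0$ is, up to the factor $2{\bf{c}_K}$, the signed distance of $P$ to the vertical line $\ell:\,x=-\tfrac{1}{2{\bf{c}_K}}\bigl(2a^2-2{\bf{c}_K}^2-\tfrac{R^2}{2}\bigr)$. Hence the envelope equation is exactly the focus--directrix relation
\[
|P-F'|=\frac{2{\bf{c}_K}}{R}\,\operatorname{dist}(P,\ell),
\]
a conic with focus $F'$, directrix $\ell$ and eccentricity $e=\tfrac{2{\bf{c}_K}}{R}$. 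This immediately gives that $F'$ is always a focus and that the conic is an ellipse, a parabola, or a hyperbola according to whether $2{\bf{c}_K}<R$, $2{\bf{c}_K}=R$, or $2{\bf{c}_K}>R$, matching the trichotomy in the statement.

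Finally, to match the explicit equations I would expand $C_0^2=A^2+B^2$ into the form $(4{\bf{c}_K}^2-R^2)x^2-R^2y^2+(\text{lower order})=0$. For $2{\bf{c}_K}\neq R$ the $x^2$-coefficient is nonzero, so completing the square in $x$ puts the conic in central form; reading off the center $x_c$ and reflecting the focus $F'$ across it gives the second focus at $2x_c-2{\bf{c}_K}=-\tfrac{8a^2{\bf{c}_K}}{4{\bf{c}_K}^2-R^2}$. For $2{\bf{c}_K}=R$ the $x^2$-term drops out and dividing by the linear coefficient directly produces the stated parabola $x=\tfrac{{\bf{c}_K}}{2a^2}y^2-\tfrac{a^2}{2{\bf{c}_K}}+2{\bf{c}_K}$. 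I expect the only real labour, and the main place to slip, to be the bookkeeping needed to bring the expanded equation exactly into the displayed normalized form and to confirm the semi-axes; the focus data, by contrast, follow cleanly from the focus--directrix reading above. One should also respect the wording ``contained in'': the elimination produces the full conic, whereas the envelope actually traced by the admissible directrices is in general only an arc of it.
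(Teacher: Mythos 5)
Your proposal is correct, and at its core it is the same computation as the paper's: both set up the family $d_K(x,y,\alpha_i)=0$ with the parametrization $F_x=2{\bf{c}_K}+R\cos(\alpha_i)$, $F_y=R\sin(\alpha_i)$, impose $d_K=\partial_{\alpha_i}d_K=0$, eliminate $\alpha_i$, and land on the identical quadratic $R^2\left[(x-2{\bf{c}_K})^2+y^2\right]=\left[2{\bf{c}_K}x+\tfrac{1}{2}(4a^2-4{\bf{c}_K}^2-R^2)\right]^2$. The differences are in how you get there and what you extract afterwards, and both differences favor your version. For the elimination, the paper solves $(II)$ for $\tan(\alpha_i)$, re-expresses $\sin$ and $\cos$ with a sign $\epsilon=\pm 1$, substitutes into $(I)$ and squares; your route --- treating $(\cos\alpha_i,\sin\alpha_i)$ as unknowns of a linear system with determinant $-(A^2+B^2)$, solving by Cramer and invoking $\cos^2+\sin^2=1$ to get $C_0^2=A^2+B^2$ --- is cleaner and dodges the sign bookkeeping and the squaring step entirely. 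More substantively, the paper's proof stops at the expanded quadratic and asserts that ``the results in different cases follow,'' never actually verifying the focus claims; your focus--directrix reading $|P-F'|=\tfrac{2{\bf{c}_K}}{R}\,\mathrm{dist}(P,\ell)$ proves at a glance that $F'=(2{\bf{c}_K},0)$ is a focus in all cases, identifies the eccentricity as $2{\bf{c}_K}/R$, and yields the ellipse/parabola/hyperbola trichotomy exactly at $2{\bf{c}_K}\lessgtr R$; reflecting $F'$ across the center then gives the second focus $-\tfrac{8a^2{\bf{c}_K}}{4{\bf{c}_K}^2-R^2}$, which I checked agrees with the center read off from the displayed equation. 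Your closing remark about ``contained in'' (the elimination produces the full conic, while the actual envelope is generically only an arc) is also apt and matches the theorem's careful wording. The one caveat is that the focus--directrix reading needs ${\bf{c}_K}\neq 0$ (otherwise $C_0$ no longer defines a line), but this is implicit in the theorem's setting of a genuinely focused, non-circular boundary, and the paper makes the same tacit assumption.
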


\begin{proof}
The envelope curve for the family of directrices we look for is determined by
    \begin{align*}
        d_K(x,y,\alpha_i)&=0,~~~~(I)\\
        \partial_{\alpha_i} d_K(x,y,\alpha_i)&=0.~~~~(II)
    \end{align*}
The explicit expression for $d_K$ is given by    $$d_K=x(2{\bf{c_K}}+R\cos(\alpha_i))+yR\sin(\alpha_i)+2 a^2-2{\bf{c_K}}^2-\frac{R^2}{2}-2{\bf{c_K}}R\cos(\alpha_i).$$
Condition $(II)$ thus reads

$$-xR\sin(\alpha_i)+yR\cos(\alpha_i)+2{\bf{c_K}}R\sin(\alpha_i)=0.$$

and thus

$$\tan(\alpha_i)=\frac{\sin(\alpha_i)}{\cos(\alpha_i)}=\frac{y}{x-2{\bf{c}_K}}.$$

We write
\begin{align*}
        \sin(\alpha_i)&= \frac{\epsilon\tan(\alpha_i)}{\sqrt{1+\tan^2(\alpha_i)}},\\
        \cos(\alpha_i)&=\frac{\epsilon}{\sqrt{1+\tan^2(\alpha_i)}},
\end{align*}
where $\epsilon =\pm 1$ depending on the quadrant $\alpha_i$ lies in.
Plugging in these expressions in $(I)$ together with the result for $\tan(\alpha_i)$ obtained from $(II)$ we obtain:
\begin{align*}
(I) &\Leftrightarrow \frac{R\epsilon (x-2{\bf{c}_K})}{\sqrt{1+\tan^2(\alpha_i)}}+\frac{R\epsilon y \tan(\alpha_i)}{\sqrt{1+\tan^2(\alpha_i)}}+2x{\bf{c}_K}+\frac{1}{2}(4a^2-4{\bf{c}_K}^2-R^2)=0\\
& \Leftrightarrow \frac{R\epsilon[(x-2{\bf{c}_K})^2+y^2]}{\sqrt{(x-2{\bf{c}_K})^2+y^2}}=-2x{\bf{c}_K}-\frac{1}{2}(4a^2-4{\bf{c}_K}^2-R^2).
\end{align*}

Squaring the above expression with $\epsilon^2=1$ we get
$$R^2[(x-2{\bf{c}_K})^2+y^2]=[2x{\bf{c}_K}+\frac{1}{2}(4a^2-4{\bf{c}_K}^2-R^2)]^2.$$

This is equivalent to
    
$$x^2(R^2-4{\bf{c}_K}^2)-2{\bf{c}_K}x(4a^2-4{\bf{c}_K}^2+ R^2)+4{\bf{c}_K}^2R^2-\frac{1}{4}(4a^2-4{\bf{c}_K}^2-R^2)^2+R^2y^2=0,$$
from which the results in different cases follow. If $R^2-4{\bf{c}_K^2}=0$, we obtain the parabolic envelope. Otherwise we obtain elliptic/hyperbolic envelopes described above. 
%\bl{Note that the difference between elliptic orbits in an attractive potential and hyperbolic orbits in the repulsive system basically results in the substitution $a\rightarrow -ia$.}
\end{proof}

Note that in the large $\bf{c_K}$ case the hyperbolic boundary turns into a parabolic boundary. In this limit holds that $a\approx {\bf{c_K}}\gg R$ and one can perform a limit calculation for the envelope of directrices in this szenario to obtain
$$\frac{4{\bf{c_K}}^2x^2}{R^2({\bf{c_K}}^2+a^2)}-\frac{4{\bf{c_K}}^2y^2}{{\bf{c_K}}^2+a^2}=1.$$
This means that in the limit of a parabolic boundary the envelope of directrices is always given by a hyperbola.

%{\textcolor{blue}{The linear eccentricity vector $\vec{e}$ is the vector pointing from the Kepler center in the opposite direction of the pericenter with length equal to the linear eccentricity. This means that in our setup
%$$\vec{e}=-\frac{1}{2}\vec{F}_i,$$
%holds.
%As a direct consequence it follows that tip of the linear eccentricity vector lies on a circle with center $(-\bf{c}_K,0)$ and radius $R/2$.}}

\section{Hooke billiards}\label{sec:Hooke}
\subsection{The Hooke-Kepler correspondence}
In two dimensions, the Kepler system is dual to the Hooke system. This and other dualities of power-law forces were first discovered by \cite{MacLaurin} and later on discussed in \cite{Goursat}. In particular by identifying the plane with the complex plane it is seen \cite{Goursat,MacLaurin} that under the conformal mapping $z\mapsto z^2$, unparametrized trajectories 
of the Hooke system are mapped to unparametrized trajectories 
of the Kepler problem. In particular, the centered Hooke ellipses are mapped to focused Kepler ellipses. 
Moreover, it is not hard to check that both foci of the Hooke ellipse  are mapped to the second focus $F'$ of the Kepler ellipse: Say the Kepler ellipse is given by $$|q|+|q-F'|={C_1}$$
for a constant $C_1$. This means the pre-image satisfies 
\begin{equation}\label{eq: square square}
|z^2|+|z^2-F'|={C_1}.
\end{equation}
Let $\sqrt{F'} \in \mathbb{C}$ be any square root of $F'$.
We have 
$$|z+\sqrt{F'}|^2+|z-\sqrt{F'}|^2=2 (|z|^2+|F'|)$$
and 
$$|z+\sqrt{F'}| \cdot |z-\sqrt{F'}|= |z^2-F'|.$$
Thus we obtain from \eqref{eq: square square} that
$$(|z+\sqrt{F'}|+|z-\sqrt{F'}|)^2={C_2} $$ 
for some constant $C_2$. Thus the pre-image is an ellipse with foci $\pm \sqrt{F'}$.

Since the mapping is conformal, this correspondence extends to the corresponding billiard systems \cite{Panov}, \cite{takeuchi2021conformal}. The transformation also leads to many new classes of integrable mechanical billiards in systems similar to the Stark problem (Kepler problem with an external constant force field)\cite{takeuchi2021conformal}. 

Due to the linearity of the force in the Hooke system, as a rule, it is often easier to carry out investigations in the Hooke system than in the Kepler counterpart. Some questions concerning the Kepler problem become much apparent after being transformed into the Hooke problem. As an example we first prove Lemma \ref{lem:intersection focal ellipses}.

\begin{proof}
(of Lemma \ref{lem:intersection focal ellipses}): The pre-images of both the flight ellipse and the boundary are centered ellipses. They intersect at most at four points, coming in pairs symmetric with respect to the center. Thus the image of the set of intersection points consists at most of two points.
\end{proof}

As will be shown in the following, the computation of foci curve in the Kepler billiard here is one of the few examples, in which the calculations can be carried out in a much easier way in the Kepler billiard. The foci curve of the Hooke billiard is thus obtained in a simple fashion. It does not seem to be easy to be directly identified in the Hooke billiard though, as the curve is in general of degree 4.

\subsection{The foci curve and the directrices}
The integrability of Hooke billiards inside 
%or outside 
a centered conic section domain is well-known \cite{Jacobi1866}, \cite{Fedorov2001}. Here we illustrate the geometric meaning of an additional first integral of these Hooke billiards.

%Before deriving the corresponding first integral for the Hooke billiards we first state some general results on the corresponding orbits, which for an attractive force correspond to rotated, 
We consider orbits of the attractive Hooke problem, which are centered ellipses $\mathcal{H}_i$ in the plane. As in the case of Kepler ellipses it turns out useful to describe these ellipses at a given energy $E$ in terms of their foci, which lie symmetrically with respect to the Hooke center that we put at the origin. 

We denote one of the two foci by $F_i=(F_{x,i},F_{y,i})$. The Hooke flight ellipse at given positive energy $E$ with a focus at $F_i$ is thus given by the equation:

$$\mathcal{H}_i:\left(x^2+y^2-\frac{2E}{k}\right)^2=\left[(x-F_{x,i})^2+(y-F_{y,i})^2\right]\cdot \left[(x+F_{x,i})^2+(y+F_{y,i})^2\right].$$

Note that in contrast to the Kepler system, for which the semi-major axis $a$ of the Kepler ellipse is conserved, the semi-major axis of the Hooke ellipse (denoted by $a_{H}$) does not relate in the same way to the total energy. It varies with the position of the focus according to
$$a_{H}=\sqrt{\frac{1}{2}\cdot (F_{x,i}^2+F_{y,i}^2)+\frac{E}{k}}.$$

We can now state the first main theorem regarding the integrable Hooke billiard system.

\begin{theorem}[Hooke foci curve]
    Consider the Hooke billiard with a centered conic section $\mathcal{B}_H$ as boundary. Then the foci of a Hooke flight elliptic orbits lie on a foci curve given by a Cassini oval of the form
    $$[(x-{\bf{c}_H})^2+y^2]\cdot [(x+{\bf{c}_H})^2+y^2]=R_H^4,$$
    where ${\bf{c}_H}$ is the linear eccentricity of $\mathcal{B}_H$ and $R_H$ can be regarded as a generalised radius. 
\end{theorem}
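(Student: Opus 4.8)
The plan is to deduce this theorem directly from the Kepler foci circle (Theorem \ref{thm:Kepler_first_integral}) by transporting it through the conformal Hooke--Kepler correspondence $z\mapsto z^2$ established above, rather than computing the foci curve intrinsically in the Hooke picture (where, as noted, it is a degree-$4$ curve and cumbersome to handle directly). I would identify the plane with $\mathbb{C}$ and write $\pi(z)=z^2$, sending the Hooke configuration to the Kepler one. Under $\pi$ the Hooke boundary $\mathcal{B}_H$, a centered conic with foci $\pm{\bf{c}_H}$, is mapped to the focused Kepler boundary $\mathcal{B}_K=\pi(\mathcal{B}_H)$; since $(\pm{\bf{c}_H})^2={\bf{c}_H}^2$, the two foci of $\mathcal{B}_H$ collapse onto the single second focus $F'={\bf{c}_H}^2$ of $\mathcal{B}_K$ (consistently with $2{\bf{c}_K}={\bf{c}_H}^2$), the other Kepler focus being the origin. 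Because $\pi$ intertwines the two billiard dynamics, the image of a Hooke billiard orbit is exactly a Kepler billiard orbit on $\mathcal{B}_K$, so Theorem \ref{thm:Kepler_first_integral} applies to it.

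Next I would track the foci. A Hooke flight ellipse $\mathcal{H}_i$ has the symmetric pair of foci $\pm F_i$, and as shown above $\pi(\mathcal{H}_i)$ is a Kepler flight ellipse $\mathcal{K}_i$ whose second focus is the common image $w_i:=F_i^2=(-F_i)^2$. By Theorem \ref{thm:Kepler_first_integral} the points $w_i$ all lie on the circle of some fixed radius $R$ centered at $F'={\bf{c}_H}^2$, i.e. $|F_i^2-{\bf{c}_H}^2|=R$ for every $i$. The decisive step is then the factorization $F_i^2-{\bf{c}_H}^2=(F_i-{\bf{c}_H})(F_i+{\bf{c}_H})$ together with multiplicativity of the modulus, which turns this single circle condition into $|F_i-{\bf{c}_H}|\cdot|F_i+{\bf{c}_H}|=R$. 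Writing $F_i=(x,y)$ and squaring gives
$$[(x-{\bf{c}_H})^2+y^2]\cdot[(x+{\bf{c}_H})^2+y^2]=R^2,$$
which is precisely the asserted Cassini oval with foci $\pm{\bf{c}_H}$ and generalized radius $R_H:=\sqrt{R}$ (so that $R_H^4=R^2$). I would note that the sign ambiguity in the choice of focus is harmless: both $\pm F_i$ satisfy the same equation, since the Cassini oval is invariant under $F_i\mapsto -F_i$.

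The computation itself is a one-line consequence of the factorization, so the genuine work --- and the main obstacle --- is conceptual bookkeeping in setting up the correspondence: verifying that $\pi$ really conjugates the Hooke billiard reflection on $\mathcal{B}_H$ to the Kepler billiard reflection on $\mathcal{B}_K$ (granted here by the cited correspondence), correctly locating the image focus $F'={\bf{c}_H}^2$ so that the Kepler circle is centered at the right point, and confirming that the square-root preimage of a circle about $F'$ is indeed the full Cassini oval rather than only one of its branches. This last point is exactly what makes the Hooke foci curve quartic while the Kepler one is merely a circle, and it accounts for the asymmetry in difficulty between the two billiards emphasized in the introduction.
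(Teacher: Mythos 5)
Your proposal is correct and follows essentially the same route as the paper: both transport the Kepler foci circle of Theorem \ref{thm:Kepler_first_integral} through the complex square map $z\mapsto z^2$, using $2{\bf{c}_K}={\bf{c}_H}^2$ and the factorization $|z^2-{\bf{c}_H}^2|=|z-{\bf{c}_H}|\cdot|z+{\bf{c}_H}|$ to turn the circle condition into the Cassini oval equation with $R_H=\sqrt{R}$. The only difference is expository: you spell out the bookkeeping (conjugacy of the billiard dynamics, both Hooke foci mapping to the single Kepler second focus, full preimage versus one branch) that the paper leaves implicit from its preceding discussion of the Hooke--Kepler correspondence.
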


We denote the set of points $(x,y)\in \mathbb{R}^2$ lying on the Cassini curve by $\mathcal{C}({\bf{c}_H},R_H)$.

\begin{proof} The foci curve in the Hooke system is the pre-image under the complex square mapping $\mathbb{C} \mapsto \mathbb{C}, \quad z \mapsto q=z^2$ of the foci curve of the associated Kepler system. 
%Let $q\in \mathbb{C}$ be the complex variable describing the foci curve for the Newton system and $z\in\mathbb{C}$ be the variable for the Hooke system with $q=z^2$. Then 
The pre-image of the Kepler foci curve $|q-2{\bf{c}_K}|=R$, which is a circle with radius $R$ and centered at $(2{\bf{c}_K},0)$, is:
$$|z^2-2{\bf{c}_K}|=R
$$    
and thus

$$|z-\sqrt{2{\bf{c}_K}}|\cdot |z+\sqrt{2{\bf{c}_K}}|=R,$$  

which describes a Cassini oval. In real coordinates $z=x+iy$ the equation reads
\begin{equation*}
    [(x-{\bf{c}_H})^2+y^2]\cdot [(x+{\bf{c}_H})^2+y^2]=R_H^4,
\end{equation*}
with ${\bf{c}_H}=\sqrt{2{\bf{c}_K}}$ 
and generalised radius $R_H=\sqrt{R}$.
\end{proof}
 
The eccentricity of the Cassini oval is defined as
\begin{equation*}
    e:=\frac{R_H}{\bf{c}_H}.
\end{equation*}
This parameter helps to distinguish the different shapes of the Cassini ovals.

Figure \ref{fig:Cassini} illustrates the obtained Cassini ovals for the different possibilities of the Cassini eccentricity $e$ compared with the corresponding circles in the Kepler system and the elliptic boundary.

\begin{figure}[htb]
    \centering
    \includegraphics[scale=0.68]{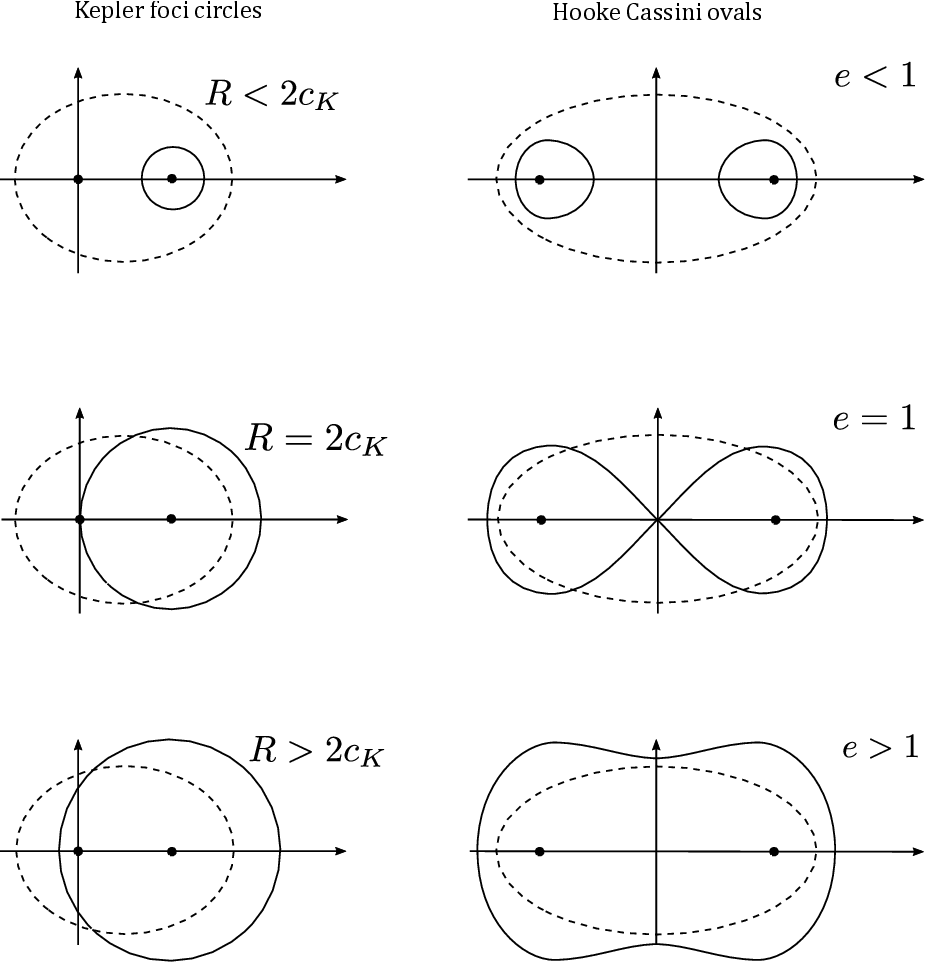}
    \caption{Illustration of Cassini ovals (right) for different values of their corresponding eccentricity $e$ and the related foci circles in the Kepler systems (left). The dashed lines correspond to the associated elliptic boundaries.}
    \label{fig:Cassini}
\end{figure}

According to his son Jacques, Giovanni Domenico Cassini proposed that the Sun should move along one of these Cassini ovals around the earth, which occupies one of the Cassini foci \cite{Cassini1740}.

The corresponding envelope curves associated to this integrable Hooke billiard system in the plane, as have been established in multi-dimensional case in \cite{Fedorov2001} can as well be obtained in the planar case directly from the Kepler case as pre-images under the complex square mapping. We have

\begin{corollary}\label{cor:Env_Hooke}
    The envelope curves $E_{H\pm}$ in the integrable Hooke billiards with an elliptic boundary with foci $F_H$ and $-F_H$ are given by confocal ellipses/hyperbolas with equations:
    
    $$E_{H\pm}=\frac{x^2}{\frac{1}{2}\cdot ({\bf{c}_H}^2\pm R_H^2)+\frac{E}{k}}+\frac{y^2}{\frac{1}{2}\cdot (-{\bf{c}_H}^2\pm R_H^2)+\frac{E}{k}}=1,$$
    where ${\bf{c}_H},R_H$ are the parameters of the Cassini foci curve $\mathcal{C}({\bf{c}_H},R_H)$ and $E$ is the energy of the Hooke system.
    \end{corollary}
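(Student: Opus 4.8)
The plan is to obtain $E_{H\pm}$ as the pre-images of the Kepler envelopes $E_\pm$ from Corollary \ref{cor:Envelopes_Kepler} under the complex square map $z\mapsto q=z^2$, in the same manner as the Hooke foci curve was pulled back from the Kepler foci circle. The structural point that makes this legitimate is that $z\mapsto z^2$ is conformal, hence a local diffeomorphism away from the origin, and therefore preserves tangency. Since every Kepler flight ellipse $\mathcal{K}_i$ is tangent to $E_\pm$ and its pre-image is the corresponding Hooke flight ellipse $\mathcal{H}_i$, the pre-image of $E_\pm$ is tangent to each $\mathcal{H}_i$ at the image point and is thus exactly the envelope of the family $\{\mathcal{H}_i\}$. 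It then only remains to compute this pre-image explicitly.

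First I would describe each Kepler envelope by its string construction, as in the proof of Corollary \ref{cor:Envelopes_Kepler}: $E_\pm$ is the confocal conic with foci $F=(0,0)$, $F'=(2{\bf{c}_K},0)$ and string length $2a\pm R$, i.e. $|q|+|q-2{\bf{c}_K}|=2a\pm R$. Substituting $q=z^2$ and using the identities already established in the Hooke-Kepler correspondence, namely $|z^2|=|z|^2$ and $|z^2-2{\bf{c}_K}|=|z-{\bf{c}_H}|\,|z+{\bf{c}_H}|$ with ${\bf{c}_H}=\sqrt{2{\bf{c}_K}}$, the relation becomes $|z|^2+|z-{\bf{c}_H}|\,|z+{\bf{c}_H}|=2a\pm R$. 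Setting $u=|z+{\bf{c}_H}|$, $v=|z-{\bf{c}_H}|$ and invoking $u^2+v^2=2(|z|^2+{\bf{c}_H}^2)$ as before, this collapses to $(u+v)^2=2(2a\pm R+{\bf{c}_H}^2)$, so the pre-image is a confocal Hooke conic with foci $\pm{\bf{c}_H}$ and
$$a_{H\pm}^2=\frac{(u+v)^2}{4}=a\pm\frac{R}{2}+\frac{{\bf{c}_H}^2}{2}.$$

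It remains to translate parameters. Using $R_H^2=R$ from the foci-curve theorem and the identification $a=\frac{E}{k}$ of the conserved Kepler semi-major axis with the Hooke energy, the last display becomes $a_{H\pm}^2=\frac{1}{2}({\bf{c}_H}^2\pm R_H^2)+\frac{E}{k}$, which is the $x^2$-denominator in the claim; subtracting the squared linear eccentricity ${\bf{c}_H}^2$ of the confocal family gives the $y^2$-denominator $\frac{1}{2}(-{\bf{c}_H}^2\pm R_H^2)+\frac{E}{k}$, completing the identification of $E_{H\pm}$.

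The computation is routine once the correspondence is in place, so the main obstacle is not analytic but bookkeeping. The one point genuinely requiring care is the energy identification $a=\frac{E}{k}$: I would pin it down by pulling back a single Kepler flight ellipse, whose pre-image has foci $\pm\sqrt{F_i}$ and semi-major axis squared $a+\frac{1}{2}|F_i|$, and matching this against the Hooke relation $a_H=\sqrt{\frac{1}{2}(F_{x,i}^2+F_{y,i}^2)+\frac{E}{k}}$ together with $|\sqrt{F_i}|^2=|F_i|$. Finally one must track the two sign choices $\pm$, which flip the pre-image between a confocal ellipse and a confocal hyperbola precisely when a denominator changes sign; the uniform form $\frac{x^2}{(\cdot)}+\frac{y^2}{(\cdot)}=1$ with possibly negative denominators subsumes both, so no separate hyperbolic argument is needed.
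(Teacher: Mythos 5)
Your route --- pulling back the Kepler envelopes $E_\pm$ of Corollary \ref{cor:Envelopes_Kepler} under $z\mapsto q=z^2$ --- is precisely the route the paper takes; in fact the paper states this corollary without any written proof, merely asserting that the Hooke envelopes are obtained ``as pre-images under the complex square mapping,'' so your proposal supplies the details the paper omits. In the elliptic case your computation is correct: the identities $u^2+v^2=2(|z|^2+{\bf{c}_H}^2)$ and $uv=|z^2-2{\bf{c}_K}|$ give $(u+v)^2=2(2a\pm R+{\bf{c}_H}^2)$, hence $a_{H\pm}^2=a\pm\frac{R}{2}+\frac{{\bf{c}_H}^2}{2}$, and the two translations $R=R_H^2$ and $a=\frac{E}{k}$ (which you rightly single out as the delicate point, and which your cross-check against the paper's formula $a_H=\sqrt{\frac{1}{2}|F_i|^2+\frac{E}{k}}$ does establish) yield exactly the claimed denominators, with the $y^2$-denominator obtained by subtracting ${\bf{c}_H}^2$.

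The genuine gap is your last claim that ``no separate hyperbolic argument is needed.'' The uniformity of the algebraic form of the \emph{answer} does not make your \emph{derivation} uniform: your starting point $|q|+|q-2{\bf{c}_K}|=2a\pm R$ is the metric definition of an ellipse, and it is simply false for $E_\pm$ when $(a\pm\frac{R}{2})^2<{\bf{c}_K}^2$, i.e.\ when the Kepler envelope is a confocal hyperbola --- a case the corollary explicitly covers (e.g.\ $E_-$ near the admissible regime $2a-R=2{\bf{c}_K}$ of the focal-reflection discussion). In that case one must start from the signed difference relation $|q|-|q-2{\bf{c}_K}|=2a-R$ satisfied by the branch actually touched by the flight ellipses; the same identities then give $(u-v)^2=2(2a-R+{\bf{c}_H}^2)$, a confocal Hooke hyperbola with the same parameter $\frac{1}{2}({\bf{c}_H}^2-R_H^2)+\frac{E}{k}$, so the stated formula survives. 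There is moreover a subtlety your shortcut hides: the set-theoretic pre-image of a \emph{full} Kepler confocal hyperbola (both branches) is the union of \emph{two} confocal Hooke hyperbolas, with parameters $\frac{1}{2}({\bf{c}_H}^2-R_H^2)+\frac{E}{k}$ and $\frac{1}{2}({\bf{c}_H}^2+R_H^2)-\frac{E}{k}$, so the hyperbolic case also requires identifying which branch carries the tangencies and pulling back only that branch. The fix is a few lines, but it is a separate argument, not a notational convention about negative denominators.
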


In Fig. \ref{fig:Cassini_Hooke} one finds an illustration of the associated Hooke billiard with an elliptic boundary and envelope curve. As shown the consecutive foci $F_i$ for the Hooke flight orbits lie along the Cassini oval.

\begin{figure}[htb]
    \centering
    \includegraphics[scale=0.7]{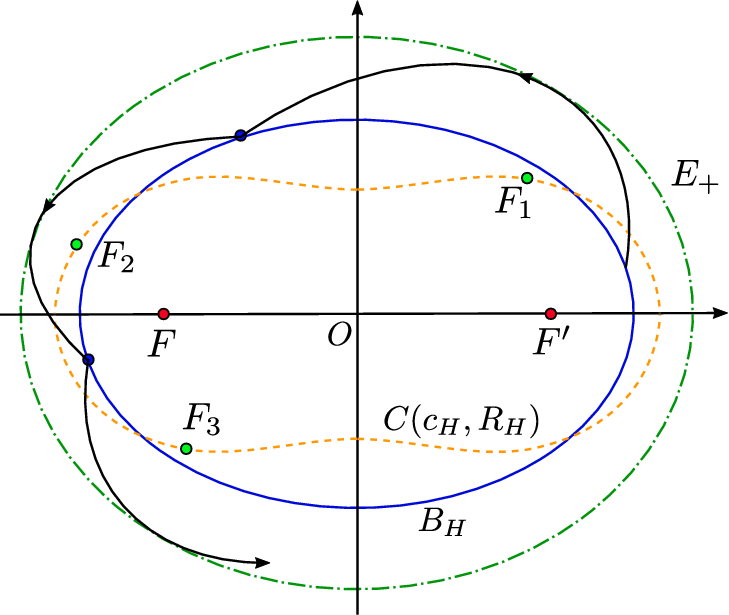}
    \caption{Cassini foci curve $\mathcal{C}({\bf{c}_H},R_H)$ (dashed), associated flight ellipses and envelope $E_+$ (dash-dot) for ${\bf{c}_H}=1$ and $R_H=\sqrt[4]{2}$.}
    \label{fig:Cassini_Hooke}
\end{figure}

Note that in the limiting case where $R_H\gg {\bf{c}_H}$, i.e. $e\gg 1$, the foci curve resembles a circle centered at the origin.

%Equivalently to the Newtonian system it is useful to consider 

%\begin{proof}
    %From the Newton system we know that the envelope curves under the conformal transformation $q=z^2$ are mapped to confocal ellipses with foci $F_H$ and \rd{$-F_H$}. 
    %A direct calculation for the possible envelope curves regarding the possible semi-major axes thus produces the result.
%\end{proof}

%Note that the corresponding envelope curves are a special case for a more general class of Hooke orbits discussed in \cite{Fedorov2001}.

\subsection{The focal reflection property}

Similar to the focal reflection property {for the} Kepler system we can state an equivalent property for the Hooke system.

We state the result in relation to the Kepler case. A Hooke flight ellipse is called admissible if its image is admissible in the corresponding Kepler system. In particular, geometrically an admissible Hooke ellipse passes through both foci of the elliptic boundary $\mathcal{B}_H$. This is the case if $\left({\bf{c_H}}-\frac{2E}{k}\right)^2=R_H^4$ is satisfied and the orbit is not degenerated to a straight line.

\begin{theorem}[Hooke Focal Reflection Property]
Let $F_H$ and $F_H'$ be the symmetric foci of the conic boundary $\mathcal{B}_H$ of the Hooke billiard. 
If one Hooke ellipse $\mathcal{H}_1$ passes through $F_H$ and $F_H'$ then all consecutive Hooke  
ellipse $\mathcal{H}_i$ pass through these points. Asymptotically the Hooke ellipses tend to the axis of foci $F_H$ and $F_H'$.    
\end{theorem}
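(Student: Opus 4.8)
The plan is to deduce the theorem from its Kepler counterpart, Theorem~\ref{Thm: Kepler Focal Reflection Property}, by transporting it through the complex square correspondence $z\mapsto z^2$ of Section~\ref{sec:Hooke}. The crucial geometric observation is that the two foci of the Hooke boundary, $F_H=(-{\bf{c}_H},0)$ and $F_H'=({\bf{c}_H},0)$, are sent to a \emph{single} point by $z\mapsto z^2$: since ${\bf{c}_H}=\sqrt{2{\bf{c}_K}}$ we have $(\pm{\bf{c}_H})^2=2{\bf{c}_K}$, so both are mapped to the second focus $F'=(2{\bf{c}_K},0)$ of the Kepler boundary $\mathcal{B}_K$. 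Moreover, a centered Hooke flight ellipse $\mathcal{H}_i$ double-covers its Kepler image $\mathcal{K}_i$ and is invariant under $z\mapsto -z$; consequently $\mathcal{H}_i$ passes through $F_H$ if and only if it passes through $F_H'$, and either of these happens precisely when $\mathcal{K}_i$ passes through $F'$.

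For the first statement I would argue as follows. Suppose $\mathcal{H}_1$ passes through $F_H$ and $F_H'$. By the observation above its image $\mathcal{K}_1$ passes through $F'$, which is exactly the admissibility condition $2a=|FF'|+|F'F_1|$ appearing in Theorem~\ref{Thm: Kepler Focal Reflection Property}; hence $\mathcal{K}_1$ is admissible, and this is consistent with the definition of an admissible Hooke ellipse recalled before the theorem. That theorem then guarantees that every Kepler flight ellipse $\mathcal{K}_i$ along the billiard orbit is admissible and passes through $F'$. Transporting back through the correspondence, which identifies the two billiard dynamics, each $\mathcal{H}_i$ is the full preimage of $\mathcal{K}_i$ and therefore passes through both $F_H$ and $F_H'$.

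For the asymptotic statement, recall that Theorem~\ref{Thm: Kepler Focal Reflection Property} shows the Kepler ellipses converge to the degenerate segment along the $FF'$-axis joining $F$ to the far apsis at distance $R+2{\bf{c}_K}$; equivalently, writing the second focus as $F_i=(2{\bf{c}_K}+R\cos\alpha_i,\,R\sin\alpha_i)$, we have $\alpha_i\to 0$ and $F_i\to(2{\bf{c}_K}+R,0)$, a point on the positive $x$-axis. The foci of $\mathcal{H}_i$ are the two square roots $\pm\sqrt{F_i}$; as $F_i$ tends to a positive real number these roots tend to $\pm\sqrt{2{\bf{c}_K}+R}$, both lying on the $x$-axis, which is precisely the axis through $F_H$ and $F_H'$. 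Since the limiting Kepler ellipse is degenerate, so is its preimage, and the $\mathcal{H}_i$ collapse onto the segment of the $x$-axis between these limiting foci.

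I expect the main technical point to be the transfer of convergence across the branch point of $z\mapsto z^2$ at the origin. Away from the origin the map is a local diffeomorphism, so Hausdorff convergence of the $\mathcal{K}_i$ transfers directly, and the degenerate Kepler segment meets the branch point only at its endpoint $F=0$, whose unique preimage is again the origin. Phrasing the convergence in terms of the foci $\pm\sqrt{F_i}$, as above, avoids this subtlety entirely and gives the cleanest route; one only has to note that continuity of $w\mapsto\sqrt{w}$ at a positive real $w$ is unproblematic, so the degeneration of $\mathcal{H}_i$ onto the $F_HF_H'$-axis follows from the degeneration of $\mathcal{K}_i$ onto the $FF'$-axis.
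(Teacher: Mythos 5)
Your proposal is correct and follows essentially the same route as the paper: both deduce the result from the Kepler focal reflection property via the square map $z\mapsto z^2$, using that the two Hooke boundary foci $\pm\sqrt{2{\bf{c}_K}}$ map to the single Kepler focus $F'$ and that the Hooke foci $\pm\sqrt{F_i}$ converge to points on the real axis as $F_i$ tends to the limit point on the positive $x$-axis. Your write-up is in fact more careful than the paper's brief argument, notably in tracking admissibility and in addressing the branch point of the square root, and it also implicitly corrects the paper's typo $P_{max}=(0,R+2{\bf{c}_K})$, which should read $(R+2{\bf{c}_K},0)$.
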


\begin{proof}
    This is a direct consequence of the Focal Reflection Property, Thm. \ref{Thm: Kepler Focal Reflection Property} in the Kepler system. The Kepler conics passing through the second focus of the boundary are pulled back by $z \mapsto q=z^2$ to Hooke conics passing through both foci of the boundary. Without loss of generality we may assume that the second focus of the Kepler boundary is on the positive real-axis. The foci of the Hooke boundary thus lie on the real axis, symmetric with respect to the origin. The foci of the flight ellipses thus converge to the pre-image of the point $P_{max}$ which also lie on the real axis. In particular, they tend to the real axis.
\end{proof}

\subsection{The family of directrices in the Hooke system}

For the Kepler system, we have derived the envelopes for the family of directrices and have seen that these correspond to (parts of) conic sections with a focus at the focus $F'$ of the boundary $\mathcal{B}_K$. 

For the Hooke system we now carry out the same steps. Here we simply present a parametric expression for the envelope curves. These in general are algebraic curves of degree 4. Note that the directrices in the Hooke and Kepler system are not directly related by the complex square mapping, which does not preserve straight lines.

We aim to describe the foci of the Hooke flight ellipses $F_i\in \mathcal{C}({\bf{c}_H},R_H)$ in terms of $\bf{c}_H$ and $R_H$. In polar coordinates, we have $F_i=(r_\pm(\varphi),\varphi)$, where for the radius we have two different choices

$$r_\pm(\varphi)=\sqrt{{\bf{c}_H}^2 \cos(2\varphi)\pm \sqrt{{\bf{c}_H}^4\cos^2(2\varphi)+R_H^4-{\bf{c}_H}^4}}.$$
corresponding to the 2 possible intersections of the ray with angle $\varphi$ with $\mathcal{C}({\bf{c}_H},R_H)$.

The directrices of the Hooke ellipses are found by a direct computation, and are given by

\begin{equation*}
    d_H:=x\cdot F_x+y\cdot F_y +\frac{1}{2}\cdot \left(F_x^2+F_y^2\right)+\frac{E}{k}=0,
\end{equation*}
with $F_x=r_\pm(\varphi)\cdot \cos(\varphi)$ and $F_y=r_\pm(\varphi)\cdot \sin(\varphi)$.

\begin{figure}
    \centering
    \includegraphics[scale=0.7]{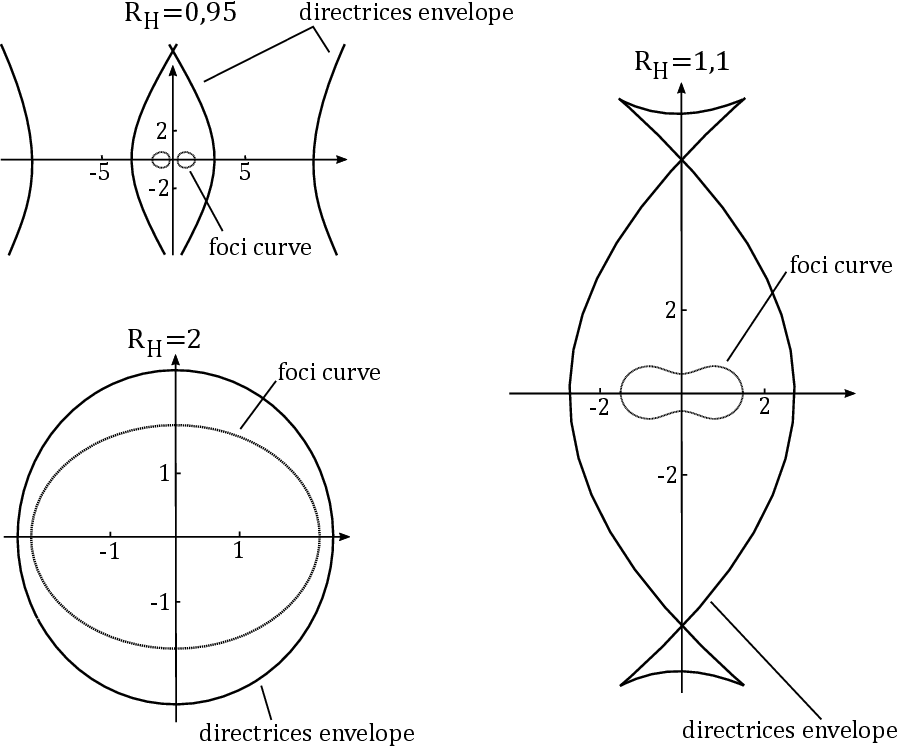}
    \caption{Envelope curves for family of directrices for $R_H\in\{0.95,1.1,2\}$ at fixed ${\bf{c}_H}=1$ and $E/k=3$.}
    \label{fig:Hooke_Direct}
\end{figure}

\begin{theorem}
    The envelope curve of the family of directrices of Hooke flight ellipses {with ${\bf{c_H}}>0$ are given in parametric form by}
    \begin{align*}
    x(\varphi)&=-\left(\frac{E}{k\cdot (F_x^2+F_y^2)}-\frac{1}{2}\right)\cdot \partial_\varphi F_y-F_x,\\
    y(\varphi)&=\left(\frac{E}{k\cdot (F_x^2+F_y^2)}-\frac{1}{2}\right)\cdot \partial_\varphi F_x-F_y.
\end{align*}
For ${\bf{c_H}}=0$, \emph{i.e.} when $\mathcal{B}_H$ is a circle, the envelope of the directrices is given by the circle 
$$x^2+y^2=\frac{2E}{k}.$$
\end{theorem}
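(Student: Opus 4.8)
The plan is to obtain the envelope by the standard elimination: a point of the envelope of the family $\{d_H(\cdot,\cdot,\varphi)\}$ must satisfy simultaneously
\begin{align*}
(I)\quad & d_H = x F_x + y F_y + \tfrac12\left(F_x^2+F_y^2\right) + \tfrac{E}{k}=0,\\
(II)\quad & \partial_\varphi d_H = x\,\partial_\varphi F_x + y\,\partial_\varphi F_y + F_x\,\partial_\varphi F_x + F_y\,\partial_\varphi F_y=0,
\end{align*}
where $F_x=r_\pm(\varphi)\cos\varphi$ and $F_y=r_\pm(\varphi)\sin\varphi$ run along the Cassini foci curve $\mathcal{C}({\bf c}_H,R_H)$. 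For each fixed $\varphi$ these are two \emph{linear} equations in $(x,y)$, so the whole problem reduces to solving one $2\times2$ system and simplifying. In matrix form it reads
$$\begin{pmatrix} F_x & F_y \\ \partial_\varphi F_x & \partial_\varphi F_y \end{pmatrix}\begin{pmatrix} x \\ y \end{pmatrix}=\begin{pmatrix} -\tfrac12\left(F_x^2+F_y^2\right)-\tfrac{E}{k} \\ -F_x\,\partial_\varphi F_x - F_y\,\partial_\varphi F_y \end{pmatrix}.$$

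The crux is the coefficient determinant. Writing the focus in polar form $F=\rho\,(\cos\varphi,\sin\varphi)$ with $\rho=r_\pm(\varphi)$ and using $\partial_\varphi F_x=\rho'\cos\varphi-F_y$, $\partial_\varphi F_y=\rho'\sin\varphi+F_x$, a one-line computation shows that the $\rho'$-contributions cancel and
$$F_x\,\partial_\varphi F_y - F_y\,\partial_\varphi F_x=\rho^2=F_x^2+F_y^2.$$
Since ${\bf c}_H>0$ keeps the foci off the Hooke center, $\rho=r_\pm(\varphi)>0$ and this determinant never vanishes, so the system has a unique solution. This identity is really the heart of the matter: it is what turns the Cramer's-rule quotients into the advertised closed form. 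Applying Cramer's rule and eliminating $\rho'$ through the same polar relations collapses the two quotients exactly to the stated parametrization; the denominator $F_x^2+F_y^2$ is why the resulting envelope is an algebraic curve of degree $4$.

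A slicker route, which I would use to keep the write-up short, avoids Cramer entirely: set $\mu:=\frac{E}{k\left(F_x^2+F_y^2\right)}-\tfrac12$ and simply verify that the claimed pair $x=-\mu\,\partial_\varphi F_y-F_x$, $y=\mu\,\partial_\varphi F_x-F_y$ solves $(I)$ and $(II)$. On substitution into $(II)$ the $\mu$-terms cancel pairwise and what remains is precisely the inhomogeneity $-F_x\partial_\varphi F_x-F_y\partial_\varphi F_y$; on substitution into $(I)$ the combination $F_x\partial_\varphi F_y-F_y\partial_\varphi F_x$ reappears and, by the determinant identity above, reduces the left-hand side to $-\left(\mu+1\right)\rho^2=-\tfrac12\rho^2-\tfrac{E}{k}$, cancelling the constant term. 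Either way one lands on the parametrization in the theorem.

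The one delicate point, and the step I would check most carefully, is the circular case ${\bf c}_H=0$. There the Cassini oval collapses to $\rho\equiv R_H$, hence $\rho'=0$, and the directrices degenerate to the pencil of lines $R_H\left(x\cos\varphi+y\sin\varphi\right)+\tfrac12R_H^2+\tfrac{E}{k}=0$. Each of these lies at one and the same distance from the Hooke center, independent of $\varphi$, so their envelope is visibly a circle concentric with the center; reading off that common distance gives its radius immediately. The main obstacle is then purely bookkeeping: one must use how the generalised Cassini radius $R_H$ specializes once $\mathcal{B}_H$ degenerates to a circle in order to bring this radius into the clean closed form $x^2+y^2=\tfrac{2E}{k}$, rather than leaving an expression still carrying $R_H$.
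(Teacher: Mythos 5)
Your treatment of the main case ${\bf c}_H>0$ is correct and is essentially the paper's own argument: the paper solves condition $(II)$ by writing $\vec{z}+\vec{F}=\mu\,(-\partial_\varphi F_y,\partial_\varphi F_x)$ and then determines $\mu$ from $(I)$ using exactly your determinant identity $F_x\,\partial_\varphi F_y-F_y\,\partial_\varphi F_x=F_x^2+F_y^2$; your Cramer's-rule and verification routes are the same linear algebra run in the opposite direction. (One small caveat: the justification ``${\bf c}_H>0$ keeps the foci off the Hooke center'' fails in the lemniscate case $R_H={\bf c}_H$, where the Cassini oval passes through the origin; one should simply assume $\rho\neq 0$ there, as the paper implicitly does.)

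The genuine gap is in the circular case ${\bf c}_H=0$. You correctly observe that the directrices form a pencil of lines at a common distance from the origin, so the envelope is a concentric circle; but the ``bookkeeping'' you defer cannot be carried out. That common distance is
$$\frac{\tfrac12 R_H^2+\tfrac{E}{k}}{R_H}=\frac{R_H}{2}+\frac{E}{kR_H},$$
and by the AM--GM inequality this is $\geq\sqrt{2E/k}$, with equality if and only if $R_H^2=2E/k$. Nothing about the degeneration of $\mathcal{B}_H$ to a circle forces this value of $R_H$: for a circular boundary the foci-curve radius is fixed by the conserved angular momentum of the orbit and can take any positive value, so your argument proves that the envelope is the circle of radius $\frac{R_H}{2}+\frac{E}{kR_H}$, not $x^2+y^2=2E/k$ (indeed the paper's closing remark gives precisely this radius for near-circular foci curves). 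The paper reaches the stated circle by a different dichotomy: the solutions of $(II)$, i.e.\ of $[\vec{z}+\vec{F}]\cdot\partial_\varphi\vec{F}=0$, split into the branch $\vec{z}+\vec{F}\perp\partial_\varphi\vec{F}$, which yields the parametric formula (and which in fact remains valid when ${\bf c}_H=0$), and the branch $\vec{z}=-\vec{F}$. If a continuous arc of the envelope lies on the second branch, then $(I)$ gives $x^2+y^2=2E/k$ and simultaneously forces the foci curve itself to satisfy $F_x^2+F_y^2=2E/k$, i.e.\ to be a centered circle, which happens only for ${\bf c}_H=0$ and then automatically with $R_H^2=2E/k$. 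In other words, the constraint $R_H^2=2E/k$ that your direct computation exposes as necessary is, in the paper's proof, built into the branch condition rather than something derivable from ${\bf c}_H=0$ alone; without it, your final reduction to $x^2+y^2=2E/k$ is false.
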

\begin{proof}
    The envelope curve is determined by
    \begin{align*}
        d_H(x,y,\varphi)=&0,~~~~(I)\\
        \partial_\varphi d_H(x,y,\varphi)&=0.~~~~(II)
    \end{align*}
We have
    $$d_H(x,y,\varphi)=x\cdot F_x(\varphi)+y\cdot F_y(\varphi) +\frac{1}{2}\cdot \left(F_x(\varphi)^2+F_y(\varphi)^2\right)+\frac{E}{k}.$$
Then by condition $(II)$ it follows that
    $$\partial_\varphi d_H=x\cdot \partial_\varphi F_x+y\cdot \partial_\varphi F_y+F_x\cdot \partial_\varphi F_x+F_y\cdot \partial_\varphi F_y=0.$$
With $\vec{z}=(x, y),~ \vec{F}=(F_x,F_y)^T$ this condition can be alternatively written as
    $$[\vec{z}+\vec{F}(\varphi)]\cdot \partial_\varphi \vec{F}(\varphi)=0.$$
The first solution to this equation is given by:
    \begin{align*}
        x(\varphi)&=-\mu \cdot \partial_\varphi F_y -F_x,\\
        y(\varphi)&=\mu \cdot \partial_\varphi F_x -F_y,
    \end{align*}
where $\mu$ is a scaling parameter, which can then be uniquely determined by $(I)$. 
Indeed, plugging these two expressions into $(I)$, we have

    $$\mu(F_y\cdot \partial_\varphi F_x-F_x\cdot \partial_\varphi F_y)-\frac{1}{2}(F_x^2+F_y^2)+\frac{E}{k}=0.$$
It is not hard to straightforwardly verify that
    $$F_x\cdot \partial_\varphi F_y-F_y\cdot \partial_\varphi F_x=F_x^2+F_y^2.$$
With these we obtain
    $$\mu=\frac{E}{k\cdot (F_x^2+F_y^2)}-\frac{1}{2}.$$
Plugging in this expression for the two coordinates $x$ and $y$  one obtains the result.
    
The second possible solution to the equation $[\vec{x}+\vec{F}(\varphi)]\cdot \partial_\varphi \vec{F}(\varphi)=0$ is given by $\vec{z}=-\vec{F}(\varphi)$. We assume that a continuous part of the envelop curve is given by this second solution, as otherwise the first solution always holds. Inserting this in $(I)$ yields
    $$x^2+y^2=\frac{2E}{k},$$
which is a circle centered at the origin, with radius $\sqrt{2E/k}$.
Since $\vec{z}=-\vec{F}$, the corresponding foci curve is given
$$F_x^2+F_y^2=\frac{2E}{k}.$$
This means that a continuous part of the foci curve is part of a centered circle. This is only possible if $\mathcal{C}({\bf{c_H}},R_H)$ is a circle, which is only possible if ${\bf{c_H}}=0$, \emph{i.e.} if $\mathcal{B}_H$ is a circle.
\end{proof}

In Fig. \ref{fig:Hooke_Direct} one finds various examples for the envelope of directrices for different parameters of $R_H$ at fixed $E/k$ and ${\bf{c}_H}>0$.

%\begin{figure}
%   \centering
% \includegraphics[scale=0.7]{Hooke_Direct.eps}
%    \caption{Envelope curves for family of directrices for $R_H\in\{0.95,1.1,2\}$ at fixed ${\bf{c}_H}=1$ and $E/k=3$.}
 %   \label{fig:Hooke_Direct}
%\end{figure}

Note that when $R_H$ is large, the foci curve asymptotically resembles a circle centered at the origin with radius $R_H$. The corresponding envelope curve for the family of directrices also resembles a concentric circle with radius $r$ given by 

$$r=\frac{E}{k\cdot R_H}+\frac{1}{2}R_H.$$

%\bl{As a remark one can show that for physical orbits $E/(k\cdot R_H)$ has to be larger then $R_H/2$. Thus $r>R_H$.}

%Note that from the parameter space considerations it is clear that $\frac{E}{k}>\frac{R_H^2}{2}$, and thus $r>R_H$.

\section*{Acknowledgement}
Lei Zhao is supported by DFG ZH 605-1/2.

\bibliographystyle{cas-model2-names} %alpha
\bibliography{main.bib}
\end{document}